    \patchcmd{\section}{\scshape\centering}{\bfseries\large}{}{}
\definecolor{link1}{rgb}{0,0,.7}
\definecolor{link2}{rgb}{0,0.25,0.5}
\numberwithin{equation}{section}
\theoremstyle{plain}
\newtheorem{theorem}{Theorem}[section]
\newtheorem{lemma}[theorem]{Lemma}
\newtheorem{proposition}[theorem]{Proposition}
\newtheorem{conjecture}[theorem]{Conjecture}
\newtheorem*{theorem*}{Theorem}
\newtheorem*{lemma*}{Lemma}
\newtheorem*{proposition*}{Proposition}
\newtheorem*{corollary*}{Corollary}
\theoremstyle{definition}
\theoremstyle{remark}
\newtheorem{remark}[theorem]{Remark}
\newtheorem*{remark*}{Remark}
\newtheoremstyle{misc}
  {}
  {}
  {\itshape}
  {}
  {\bfseries}
  {.}
  { }
  {\thmname{#3}\thmnumber{ #2}}
\theoremstyle{misc}
\newtheorem*{miscthm*}{Misc}
\theoremstyle{plain}
\providecommand{\cites}[1]{\cite{#1}}
\newcounter{GICase}\setcounter{GICase}{0}
\newenvironment{proofcases}{\setcounter{GICase}{0}}{}
\newcommand{\case}[1]{\smallskip\par\noindent\stepcounter{GICase}\emph{Case \Roman{GICase}: #1.}\xspace}
\newcounter{GIStep}\setcounter{GIStep}{0}
\newcommand{\dv}{\nabla \cdot}
\newcommand{\curl}{\nabla \times}
\newcommand{\prob}{\bm{P}}
\newcommand{\eps}{\varepsilon}
\newcommand{\R}{\mathbb{R}}
\newcommand{\C}{\mathbb{C}}
\newcommand{\N}{\mathbb{N}}
\newcommand{\FixDelimSubscripts}[1]{%
  \reDeclarePairedDelimiterInnerWrapper{#1}{nostar}{##1##2##3}
}
\newcommand{\newdelim}[3]{%
  \DeclarePairedDelimiter{#1}{#2}{#3}
  \FixDelimSubscripts{#1}
}
\newdelim{\paren}{(}{)}
\newdelim{\brak}{[}{]}
\newdelim{\set}{\{}{\}}
\newdelim{\floor}{\lfloor}{\rfloor}
\newdelim{\ceil}{\lceil}{\rceil}
\newdelim{\abs}{\lvert}{\rvert}
\newdelim{\qv}{\langle}{\rangle}
\newdelim{\av}{\langle}{\rangle}
\newif\ifdisplaystyle\displaystylefalse
\everydisplay\expandafter{\the\everydisplay\displaystyletrue}
\DeclarePairedDelimiter{\norm}{\lVert}{\rVert}
\reDeclarePairedDelimiterInnerWrapper{\norm}{nostar}{%
  \ifdisplaystyle\mathopen{#1}#2\mathclose{#3}%
  \else#1#2#3\fi%
}
\newcommand{\given}[1][\big]{\mathchoice{\;#1|\;}{\:|\:}{\:|\:}{\:|\:}}
\newcommand{\defeq}{\stackrel{\scriptscriptstyle\textup{def}}{=}}
\newtheorem*{heuristic}{Heuristic ODE}
\newcommand{\sfrac}[2]{#1/#2}
\newcommand{\GF}{G_{\!F,\eps}}
\newcommand{\nablaA}{\nabla_{\!\!A}}
\newcommand{\nablaAe}{\nabla_{\!\!A_\eps}}
\newcommand{\nablaAep}{\nabla_{\!\!A_\eps'}}
\newcommand{\Ee}{\mathcal E_\eps}
\newcommand{\dist}{d}
\newcommand{\hex}{h_{\textit{ex}}}
\newcommand{\logep}{\abs{\ln \eps}}
\begin{document}
  \title[Vortex Nucleation]{A model for vortex nucleation in the Ginzburg-Landau equations}

  \author[Iyer]{Gautam Iyer\textsuperscript{1}}
  \address{%
    \textsuperscript{1}
    Dept.\ of Mathematical Sciences,
    Carnegie Mellon University,
    Pittsburgh, PA 15213.}
  \email{gautam@math.cmu.edu}

  \author[Spirn]{Daniel Spirn\textsuperscript{2}}
  \address{%
    \textsuperscript{2}
    School of Mathematics,
    University of Minnesota,
    Minneapolis, MN  55455.}
  \email{spirn@math.umn.edu}

  \thanks{This material is based upon work partially supported by the National Science Foundation (through grants
    DMS-1252912 to GI,
    and
    DMS-0955687,
    DMS-1516565
    to DS),
    the Simons Foundation (through grant \#393685 to GI),
    the Center for Nonlinear Analysis (through grant NSF OISE-0967140),
    and the Institute for Mathematics and Applications (IMA)%
  }
  \subjclass[2010]{Primary
    35Q56; 
    Secondary
    60H30. 
  }

  \begin{abstract}
    This paper studies questions related to the dynamic  transition between local and global minimizers in the Ginzburg-Landau theory of superconductivity.
    We derive a heuristic equation governing the dynamics of vortices that are close to the boundary, and of dipoles with small inter vortex separation.
    We consider a small random perturbation of this equation, and study the asymptotic regime under which vortices nucleate.
  \end{abstract}
\maketitle
\section{Introduction.}\label{s:Intro}

This paper studies questions related to the dynamic  transition between local and global minimizers in the Ginzburg-Landau theory of superconductivity.
The Ginzburg-Landau theory provides a mesoscopic description of the state of a superconductor through the \emph{order parameter} -- a specific function $\C$-valued function $u \in H^1(\Omega)$ for which the local density of superconducting Cooper pairs is given by $|u(x)|$. 
Here $\Omega \subseteq \R^2$ is the region occupied by the superconductor.
A fundamental feature of superconductors are the presence of localized regions called \emph{vortices}, where the superconductor drops into a normal state.
In these regions  the degree of $u$ is nontrivial
about each vortex, and the induced magnetic field pierces through the superconductor.   
 
The mechanism by which vortices become energetically favorable was proved by Serfaty using a careful energy decomposition.
Recall, the Ginzburg-Landau  energy is defined by
\begin{equation} \label{e:GLenergy}
G_\eps(u,A) \defeq  \int_\Omega \paren[\Big]{
  \frac{1}{2} \abs{ \nablaA u }^2 + \frac{1}{2} \abs{ \curl A - \hex }^2 + \frac{1}{4 \eps^2} {\paren{ 1 - |u|^2 }}^2} \, dx \,,
\end{equation}
where $A$ is the magnetic potential, $\hex = \hex(\eps)$ is the strength of the external magnetic field, and $\nablaA \defeq \nabla  - i A $.     
Physically, $\eps$ is the non-dimensional ratio of the superconductors coherence length to the London penetration depth.

To understand the energy decomposition, define the \emph{Meissner} potential $\xi_{m}$ to be the solution of
\begin{align}\label{e:Meissner}
\begin{split}
- \Delta \xi_m + \xi_m + 1 & = 0  \quad\hbox{ in } \Omega \\
\xi_m & = 0 \quad\hbox{ on } \partial \Omega \,.
\end{split}
\end{align}
When $\hex$ is small enough (explicitly, when $\hex < h_{c_1}$, defined below), the purely superconducting state with no vortices gives a \emph{global} minimizer of the Ginzburg-Landau energy $G_\eps$, see \cite{SSGlobal}.
This state corresponds to $u \equiv 1$ and $A = \hex \nabla^\perp \xi_m$,
and the minimizing energy (called the \emph{Meissner energy}) is given by
\begin{equation} \label{e:MeisEnergy}
  G_m(\hex)
    \defeq G_\eps(1, \hex \nabla^\perp \xi_m)
    = {\hex^2} \int_\Omega \frac{1}{2} \abs{ \nabla \xi_m }^2 + \frac{1}{2} \abs{ \Delta \xi_m - 1 }^2 \, dx \,.
\end{equation}

If there are a finite number of vortices at points $a_j$, with degrees $d_j \in \{\pm 1\}$ respectively which are reasonably separated and away from the boundary, then Serfaty~\cite{SerfatyLocal, SerfatyARMA} shows that Ginzburg-Landau energy can be decomposed as  
\begin{equation} \label{e:decompenSer}
G_\eps(u_\eps, A_\eps) = G_m + \sum_{j=1}^n \paren{ \pi \logep + 2 \pi d_j  \hex \, \xi_m (a_j) } + o_\eps(\logep)\,,
\end{equation}
and the order parameter $u_\eps$ takes the form,
\begin{equation} \label{e:approxuform}
u_\eps (x) \approx \prod_{j=1}^n \rho_\eps(|x -a_j|) \paren[\Big]{ \frac{x - a_j }{ | x - a_j |} }^{d_j}  e^{i\psi^*_\eps} \,,
\end{equation}
where $\rho_\eps(s)$ is the equivariant vortex profile with $\rho_\eps(0) = 0$, $\rho_\eps(s) \to 1$ for $s \gg \eps$ (see Appendix II of \cite{BBH}), $d_j \in \pm 1$, and $\psi^*_\eps$ is a harmonic function that  ensures $\partial_\nu  u_\eps =0$ 
on $\partial \Omega$.

Note that $\xi_m(a_i)$ is always negative, since the maximum principle implies $-1 < \xi_m < 0$ in $\Omega$.
Thus, examining \eqref{e:decompenSer} one sees that vortices with negative degrees are never energetically favorable.
Furthermore, if the applied magnetic field $\hex$ is very large, then a positively oriented vortex can be energetically favorable.
The critical threshold at which this happens is explicitly given by
\begin{equation}
 h_{c_1} \defeq \frac{\logep }{ 2  \max \abs{ \xi_m } }\,,
\end{equation}
and is known as the \emph{first critical field}.
In this case, the optimal location for a single positively oriented, energetically favorable vortex is at the point where $\xi_m$ achieves its minimum and is located in the interior of~$\Omega$.
\medskip

Our main interest in this paper is to study the dynamic transition between the Meissner state and the energetically favorable state with an interior vortex.
We recall that the dynamics of a type-II superconductor are governed by the Gor'kov-\'Eliashberg system~\cite{GorkovEliashberg65}, a coupled system of equations describing the evolution of the order parameter~$u_\eps$ and the electromagnetic field potentials $\Phi_\eps \in H^1(\R^2, \R^1)$, $A_\eps \in H^1(\R^2, \R^2)$.
Explicitly, these equations are
\begin{alignat}{2}
\label{e:GE1}
\span \partial_{\Phi_\eps} u_\eps  = \nablaAe^2 u_\eps + \frac{1}{\eps^2} u_\eps \paren{ 1 - |u_\eps|^2 }
  &\quad& \text{in } \Omega\,,\\
\label{e:GE2}
\span E_\eps = \nabla^\perp h_\eps + {j_{A_\eps}}(u_\eps)
  && \text{in } \Omega\,,\\
\label{e:GE3}
\span \nu \cdot \nablaAe u_\eps  =\nu \cdot E_\eps = 0
  && \text{on } \partial \Omega\,,\\
\label{e:GE4}
\span h_\eps = \hex 
  && \text{on } \partial \Omega\,,
\end{alignat}
where
\begin{equation*} 
\partial_{\Phi_\eps} \defeq \partial_t  - i \Phi_\eps\,,
\quad
E_\eps \defeq \partial_t {A_\eps} - \nabla \Phi_\eps\,,
\quad
h_\eps \defeq \curl {A_\eps}\,,
\quad
j_{A_\eps}(u_\eps) \defeq \paren{ i u_\eps , \nablaAe u_\eps }\,,
\end{equation*}
and $(a, b) \defeq \frac{1}{2} ( a \overline{b} + \overline {a} b)$ is the real part of the complex inner product.  


Now consider the Gor'kov-\'Eliashberg system with initial data $(u_\eps^0, A_\eps^0)$ corresponding to the Meissner state $( 1, \hex \nabla^\perp \xi_m)$.
Since energy satisfies the diffusive identity
\begin{equation} \label{e:enlaw}
G_\eps(u_\eps(t), A_\eps(t))
  + \int_0^t \int_\Omega \paren[\Big]{ \abs{ \partial_{\Phi_\eps} u_\eps  }^2 + \abs{E_\eps}^2} \, dx \, ds = G_\eps(u_\eps^0, A_\eps^0)\,,
\end{equation}  
we will assume that $\hex$ is large enough so that
\begin{equation*}
  G_\eps(u^0_\eps,A^0_\eps) = G_{m}(\hex) > G_\eps(u^\eps_\textit{min}, A^\eps_\textit{min}) \,.
\end{equation*}
Here $(u^\eps_\textit{min}, A^\eps_\textit{min})$ denotes the global minimizer of the energy $G_\eps$ with applied magnetic field $\hex$.  
In this case the energy minimizing configuration has lower energy than the Meissner state, and the dynamic transition to the global minimizer will involve nucleating vortices.

The process by which vortices are nucleated is not yet well understood.
It was shown in \cite{BBC} and  \cite{ChapmanNucleation}  that the Meissner state is linearly stable until the applied magnetic field $\hex$ crosses the second critical field
\[
h_{c_2} \defeq \frac{C }{ \eps},
\]
where $C$ is a constant depending on the domain $\Omega$.
Since $h_{c_2}$ is much larger than $h_{c_1}$, this points at a very significant hysteresis phenomena  and the process by which this dynamically generates vortices is highly nontrivial, see \cite{DuLinHysterisis}.

Along these lines, vortices can also dynamically nucleate as a way of tunneling to lower energy states. 
Due to  topological considerations, vortices should either nucleate at the boundary or nucleate as a dipole in the interior of the domain.  In the first case, let $a_\eps = a_\eps(t)$ be the distance of the center of a vortex from the boundary of $\Omega$.
We know from \cite{SSGamma} that
when $a_\eps(0) > \exp( -|\ln \eps|^{1/2} )$
the evolution of $a_\eps$ is governed by the ODE
\begin{equation} \label{e:VortexMotionInterior}
\pi \dot{a}_\eps = - d\, \lambda \nabla \xi_m (a_\eps) \,,
\end{equation} 
where $d = \pm 1$ is the degree of the vortex.
Hence, any positive vortices move towards the interior to a lower energy state and any negative vortices move to the boundary and become excised (see \cite{SSGamma}).
However, the energy of a vortex at a distance of order~$\exp(-|\ln \eps|^{1/2})$ away from~$\partial \Omega$ is
\begin{equation*}
  G_m(\hex) + \pi \logep + o_\eps(\logep) \gg G_m(\hex)\,.
\end{equation*}
This is an extremely large barrier to overcome and is even more dramatic when a dipole is nucleated in the interior.
In particular once a dipole has separation of at least $\exp\paren{-|\ln \eps|^{1/2}}$, the associated energy is
\begin{equation*}
  G_m(\hex) + 2\pi \logep + o_\eps(\logep) \gg G_m(\hex) \,.
\end{equation*}
Thus the energy barrier to nucleate a vortex at the boundary or via a dipole is much larger than the energy gap between the Meissner state and the configuration with the vortex at the minimizer.  
\medskip

The main purpose of this paper is to better understand how this energy barrier can be overcome through the study of vortices close to the boundary and dipoles with small inter-vortex separation.
In particular, for every $\alpha \in (0, 1)$ we study the dynamics of vortices a distance of $\eps^\alpha$ away from the boundary.
The energy barrier to nucleate such vortices is $\pi (1 -\alpha) \logep$, which is a much smaller energy barrier to overcome.
We show the following results:

\begin{enumerate}
  \item
    We obtain a heuristic ODE governing the motion of these vortices (equation~\eqref{e:ODEa}, below).

  \item
    We rigorously estimate the annihilation times of vortices $O(\eps^\alpha)$ away from the boundary, and show that this agrees with the annihilation times of~\eqref{e:ODEa} (Theorem~\ref{t:annihilation} and Proposition~\ref{p:Tann}, below).

  \item
    We consider a stochastically perturbed version of the heuristic ODE governing vortex motion, and estimate the chance that the vortex nucleates (and thus achieving a lower energy state) before annihilating. This is Theorem~\ref{t:NucProb}, below.
\end{enumerate}
The same analysis can be made for vortex dipoles with inter-vortex separation~$\eps^\alpha$.

A more physically relevant problem is the direct study of a stochastically perturbed version of~\eqref{e:GE1}--\eqref{e:GE4}, without relying on the simplified heuristics.
This is a much harder question requiring a deep understanding of the long time dynamics of the underlying nonlinear stochastic PDE.
The problem is described briefly at the end of Section~\ref{s:results}, below, but its resolution is beyond the scope of the current investigation.


\subsection*{Plan of this paper}
In Section~\ref{s:results} we state the main results of this paper.
In Section~\ref{s:derivation} we formally derive the heuristic ODE~\eqref{e:ODEa} by matching terms of leading order.
In Section~\ref{s:AnnihilationTimes} prove Theorem~\ref{t:annihilation}, rigorously estimate the annihilation times of vortices a distance $O(\eps^\alpha)$ away from the boundary.
Confirming that these annihilation times agrees with that of~\eqref{e:ODEa} is relegated to Appendix~\ref{s:ODEAnnTimes}.
Finally, in Section~\ref{s:stochastic} we prove Theorem~\ref{t:NucProb}, estimating the chance of vortex nucleation.

\subsection*{Acknowledgements}
We thank the anonymous referees for many helpful suggestions and comments.

\section{Main Results.}\label{s:results}


\subsection{Boundary Vortex Dynamics and Annihilation Times}

We begin with a heuristic ODE governing the motion of a vortex close to the boundary of the domain~$\Omega$.
Since the scales we are interested in are very small, we locally flatten the boundary of~$\partial \Omega$ and state the governing equation on the half plane.

\begin{heuristic}
  Let $(0, a_\eps(t))$ be the position of a vortex at time~$t$ in the domain~$\R^2_+ \defeq \R \times \R_+$.
  If $\alpha \in (0,1)$ and $a_\eps(0) = \eps^\alpha$ then, to leading order the motion of the vortex is governed by the ODE
  \begin{equation}\label{e:ODEa}
    \dot a_\eps =  \frac{\lambda \hex }{ \logep} - \frac{1}{\logep a_\eps} \,,
  \end{equation}
  where $\lambda = -2 \partial_y \xi_m(0, 0) > 0$, which corresponds an order parameter of the form \eqref{e:approxuform}.
\end{heuristic}

We provide a formal derivation of~\eqref{e:ODEa} in Section~\ref{s:derivation} by matching terms of leading order.
To obtain a rigorous result supporting~\eqref{e:ODEa} as a model for boundary vortex dynamics,  we show that annihilation time of vortices at a distance of $\eps^\alpha$ from the boundary is $O(\eps^{2\alpha})$
in the full Gor'kov-\'Eliashberg system \eqref{e:GE1}-\eqref{e:GE4} (Theorem~\ref{t:annihilation},  below).
This this agrees with the annihilation time predicted by the ODE~\eqref{e:ODEa} (Proposition~\ref{p:Tann}, below).

In order to state Theorem~\ref{t:annihilation} we need to introduce some notation.
Define the Jacobian $J$ by
\begin{equation*}
  J(w) \defeq \det \nabla w = \frac{1}{2} \curl j(w) \,,
\end{equation*}
where $j(w) \defeq (i w, \nabla w)$.
Recall that if
\begin{equation*}
  w = \paren[\Big]{\prod_{j=1}^n \paren[\Big]{ \frac{ x - a_j}{ \abs{x - a_j}} }^{d_j} }
\end{equation*}
is the order parameter associated with $n$ point vortices located at $a_1$, \dots, $a_n$ with degrees $d_1$, \dots, $d_n \in \set{\pm 1}$ respectively,
then a direct calculation shows
\begin{equation*}
  J\paren[\Big]{\prod_{j=1}^n \paren[\Big]{ \frac{ x - a_j}{ \abs{x - a_j}} }^{d_j} }
  = \pi \sum_{j=1}^n d_j \delta_{a_j} \,.
\end{equation*}
Consequently, $J$ can be used to describe the location of vortices.

More precisely, the measure of vortex separation used throughout this paper is the $(C^{0,\gamma}_0(\Omega))^*$ norm of the differences in the Jacobian of the order parameters.
Here $0 < \gamma \leqslant 1$ if a fixed parameter.
It is convenient to note that if $a, b \in \Omega$ are such that $\min \{\dist(a,\partial \Omega), \dist(b,\partial \Omega) \} \geqslant |a-b|$, then 
\begin{equation*}
  \norm[\big]{ \delta_{a} - \delta_b }_{(C^{0,\gamma}_0(\Omega))^*} = | a - b |^\gamma \,.
\end{equation*}
We now state our first result.

\begin{theorem} \label{t:annihilation}
Let $(u_\eps(t), A_\eps(t), \Phi_\eps(t))$  be a solution to the system \eqref{e:GE1}--\eqref{e:GE4} 
under the Coulomb gauge such that
\begin{equation*}
  C_1 \logep \leqslant \hex \leqslant C_2 \exp\paren[\big]{\logep^{1/2}}\,.
\end{equation*}
for some constants $C_1, C_2 > 0$.
Suppose the initial data $(u_\eps^0, A_\eps^0, \Phi^0_\eps)$ satisfies
 \begin{equation*}
   \abs[\big]{ G_\eps(u_\eps^0, A^0_\eps) - G_{m}(\hex) } \leqslant \pi ( 1 - \alpha) \logep + C
 \end{equation*}
for a constant $C$ and $0 < \alpha < 1$. Moreover, for some~$\gamma \in (0, 1]$ suppose
 either
\begin{equation}\label{e:IDDipole}
  \norm[\Big]{ J(u^0_\eps) - \pi \paren[\big]{  \delta_{a_\eps^{0, +}} - \delta_{a_\eps^{0, -}} } }_{(C_0^{0,\gamma})^*} =o_\eps\paren[\Big]{\frac{\logep }{ \hex }} \text{ and }  \abs[\big]{ a^{0, +}_\eps - a^{0, -}_\eps } =  \eps^\alpha \,,
\end{equation}
 or 
\begin{equation}\label{e:IDBoundary}
  \norm[\big]{ J(u^0_\eps ) -  \pi \delta_{a^0_\eps} }_{(C_0^{0,\gamma})^*}  = o_\eps \paren[\Big]{ \frac{\logep }{ \hex} } \text{ and } \dist(a^0_\eps, \partial \Omega) =  \eps^\alpha \,.
\end{equation}
Then there exists a time $t_\eps \in ( \frac{\eps^{2\alpha}}{2}, \eps^{2\alpha})$ such that
\begin{equation}\label{e:annihilated}
  \norm[\big]{ 1 - \abs{u_\eps(t_\eps)} }_{L^\infty(\Omega)} = o_\eps(1).
\end{equation}
In particular, there are no vortices in the domain at time $t_\eps$.
\end{theorem}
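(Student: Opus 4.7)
The plan is to combine the energy-dissipation identity~\eqref{e:enlaw} with two classical ingredients from Ginzburg-Landau analysis. The first is a vorticity lower bound: a configuration containing a vortex at distance~$d$ from the boundary (or an unannihilated dipole of inter-vortex separation~$d$) must have energy at least $G_m(\hex) + \pi \log(d/\eps) + o(\logep)$, modulo Meissner corrections that are small when $d$ is small. The second is a Sandier-Serfaty style product estimate bounding the instantaneous dissipation rate below by roughly $\pi\logep$ times the squared vortex velocity. Together these force the vortex to stay trapped in a boundary layer of width $O(\eps^\alpha)$ and to move quickly enough that it must annihilate within the required time window.

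First I would work in the Coulomb gauge and combine the hypothesis on $G_\eps^0$ with~\eqref{e:enlaw} to extract, for all $t \geq 0$, both the pointwise energy bound $G_\eps(u_\eps(t), A_\eps(t)) \leq G_m(\hex) + \pi(1-\alpha)\logep + C$ and the cumulative dissipation bound
\[
\int_0^\infty \int_\Omega \bigl(\abs{\partial_{\Phi_\eps} u_\eps}^2 + \abs{E_\eps}^2\bigr)\,dx\,ds \leq \pi(1-\alpha)\logep + C \,.
\]
Using Jerrard-Soner Jacobian estimates, I would then track a vortex location $a_\eps(t)\in\Omega$ (resp.\ a pair $a_\eps^\pm(t)$) as long as $J(u_\eps(t))$ remains close to a Dirac (resp.\ signed pair of Diracs). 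Invoking the vorticity lower bound and comparing with the energy upper bound pins the tracked vortex within distance $O(\eps^\alpha)$ of $\partial\Omega$ (or, in the dipole case, its two lobes within separation $O(\eps^\alpha)$) throughout its lifetime.

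The heart of the argument is a contradiction. Assume no $t_\eps \in (\eps^{2\alpha}/2, \eps^{2\alpha})$ satisfies~\eqref{e:annihilated}; then a vortex persists throughout this window. The product estimate
\[
\int_\Omega \abs{\partial_{\Phi_\eps} u_\eps}^2\,dx \geq \pi \logep\, \abs{\dot a_\eps}^2\,(1-o(1)) \,,
\]
combined with the boundary-driven image force that dominates for $a_\eps = O(\eps^\alpha)$ (since $\hex \leq \exp(\logep^{1/2})$ while $1/a_\eps \geq \eps^{-\alpha}$ is much larger), gives a lower bound on $|\dot a_\eps|$ matching the heuristic ODE~\eqref{e:ODEa}. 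Integrating this ODE-type bound on $(\eps^{2\alpha}/2, \eps^{2\alpha})$ either forces $a_\eps$ to traverse its boundary layer and annihilate---contradicting persistence---or produces an accumulated dissipation exceeding the budget of $\pi(1-\alpha)\logep + C$. Either way, the required $t_\eps$ must exist; at that instant the tracked Jacobian collapses and, using parabolic regularity, $\abs{u_\eps}$ is uniformly close to~$1$ on~$\Omega$.

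The main obstacle is this last step: producing sharp product-type and vorticity lower-bound estimates in the singular regime $a_\eps \sim \eps^\alpha$, which sits at the edge of the usual vortex-tracking machinery. Keeping error terms genuinely $o(\logep)$---small compared with the critical $(1-\alpha)\logep$ energy barrier---will likely require a careful local gauge adjustment near the boundary in the spirit of~\cite{SSGamma}, or a dedicated estimate exploiting the dipole/image structure directly. A secondary difficulty is upgrading vanishing of the Jacobian at the annihilation instant to the uniform bound~\eqref{e:annihilated}, which uses parabolic regularity coming from~\eqref{e:GE1}--\eqref{e:GE4}.
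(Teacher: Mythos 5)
There is a genuine gap, and it sits exactly where you flag it as ``the main obstacle'': your argument needs a rigorous \emph{lower} bound on the vortex speed (an ODE-type bound ``matching~\eqref{e:ODEa}'') in the regime $a_\eps\sim\eps^\alpha$, and no such bound is available -- indeed the paper only derives~\eqref{e:ODEa} formally, and its rigorous proof deliberately avoids any motion law. The Sandier--Serfaty product estimate runs the wrong way for your purposes: it bounds the dissipation \emph{below} by $\pi\logep\,\abs{\dot a_\eps}^2$, so together with the dissipation budget it only caps $\int\abs{\dot a_\eps}^2$ from above; it cannot force the vortex to move fast. To convert the image force into a lower bound on $\abs{\dot a_\eps}$ you would need the full $\Gamma$-convergence-of-gradient-flows scheme (matching energy-dissipation upper and lower bounds, well-preparedness), which is exactly what breaks down below the $\exp(-\logep^{1/2})$ scale: at separation $\eps^\alpha$ the vortex core and its boundary image are too close for the standard vortex-ball/product machinery, and the energy excess is not $o(1)$. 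A secondary problem is your dissipation budget $\int_0^\infty(\abs{\partial_{\Phi_\eps}u_\eps}^2+\abs{E_\eps}^2)\leqslant \pi(1-\alpha)\logep+C$: since $\hex\geqslant C_1\logep$ may exceed $h_{c_1}$, the energy $G_\eps(t)$ need not stay above $G_m(\hex)$, so \eqref{e:enlaw} only gives the much weaker bound $O(\hex^2)$ on total dissipation (this is all the paper uses).

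The paper's route is static rather than dynamic, and this is how it sidesteps the missing motion law. After reducing to the $\eta$-compactness statement (Proposition~\ref{p:etacpt}, applied with $\eta=\pi(1-\alpha)$, after checking that your initial data make $\norm{J(u_\eps^0)}_{(C_0^{0,\gamma})^*}=o_\eps(\logep/\hex)$), one: (i) proves the energy-splitting bound $\Ee(u_\eps(t))\leqslant\eta\logep+o_\eps(\logep)$ for short times via weak-norm continuity of the Jacobian (Proposition~\ref{p:split}); (ii) pigeonholes a good time $t_\eps\in(\eps^\delta/2,\eps^\delta)$ at which the instantaneous dissipation is $\leqslant C\eps^{-\delta}\hex^2$, so that writing \eqref{e:GE1} as a static Ginzburg--Landau equation with forcing $f_\eps$ gives $\norm{f_\eps(t_\eps)}_{L^2}\leqslant C\eps^{-\delta/2}\hex$; and (iii) applies Serfaty's static structure theorem (Proposition~\ref{p:serfaty}) plus the lower bound $\norm{f_\eps}_{L^2}\gtrsim\eps^{-\mu}/\logep$ that must hold if a vortex sits at distance $\eps^\mu$ from $\partial\Omega$, with $\mu\geqslant 1-\eta/\pi$ forced by the $\Ee$ bound. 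Choosing $\delta<2-2\eta/\pi=2\alpha$ makes the upper and lower bounds on $\norm{f_\eps}_{L^2}$ contradict each other, so no vortex survives at $t_\eps$, and \eqref{e:annihilated} follows directly from \eqref{e:Linftyoutside} rather than from parabolic regularity. Unless you can supply a genuinely new quantitative motion law (or energy-decrease rate) valid at separation $\eps^\alpha$, your proposal does not close; the time window $(\eps^{2\alpha}/2,\eps^{2\alpha})$ in the theorem comes from this pigeonhole-plus-contradiction argument, not from integrating vortex trajectories.
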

\begin{remark*}
  Although Theorem~\ref{t:annihilation} guarantees that there are no vortices in the domain at time $t_\eps$, we do not know whether the Meissner state persists at later times, unlike in the non-gauged case~\cite{SerfatyCollision2}. 
  This is because of the loss of fine tuned control of the energy decomposition at larger times in the gauged problem.
  We remark, however, that the annihilation time scale in Theorem~\ref{t:annihilation} is of the same order as that in~\cite{SerfatyCollision2} in the non-gauged case.
\end{remark*}
\begin{remark*}
      Initial data satisfying~\eqref{e:IDDipole} can be constructed as follows.
        Let $\rho_\eps$ be the profile of the equivariant vortex as in \eqref{e:approxuform}, and define
	\begin{equation*}
	  u_\eps^0 = \prod_{j=1}^2 \rho_\eps(|x -a^\eps_j|) \paren[\Big]{ \frac{x - a^\eps_j }{ | x - a^\eps_j |} }^{d_j}  e^{i\psi^*_\eps} \,,
	\end{equation*}
	with $d_1 = 1, d_2 = -1$, $a_1^\eps = a_\eps^{0, +}$, $a_2^\eps = a_\eps^{0, -}$.
	We set $A_\eps^0 = \hex \nabla^\perp \xi_m$ (where~$\xi_m$ is given by~\eqref{e:Meissner}) and $\Phi^0_\eps \equiv 0$.
	A short calculation shows that
	$G_\eps (u_\eps^0, A_\eps^0) = G_m + \pi (1 - \alpha)  \logep + o_\eps(\logep)$.
\end{remark*}

For completeness, we also estimate the annihilation time of the ODE~\eqref{e:ODEa},
and show that it is of the same order as the time scales obtained in Theorem~\ref{t:annihilation}, up to a logarithmic factor. 
\begin{proposition}\label{p:Tann}
  Let $\alpha \in (0, 1]$ and suppose $a_\eps$ satisfies the ODE~\eqref{e:ODEa} with initial data $a_\eps(0) = \eps^\alpha$.
  Let $t_\eps$ be the vortex annihilation time (i.e.\ a time such that $a_\eps(t_\eps) = 0$).
  If $\eps^\alpha \hex \abs{\ln \eps} \to 0$ as $\eps \to 0$, then
  \begin{equation}\label{e:Tann}
    \lim_{\eps \to 0}
      \frac{t_\eps}{\eps^{2\alpha} \abs{\ln \eps}} = \frac{1}{2}
  \end{equation}
  for any $\lambda \geqslant 0$.
\end{proposition}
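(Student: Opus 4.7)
The plan is to integrate \eqref{e:ODEa} directly by separation of variables, since the ODE is autonomous and one-dimensional. Rewriting~\eqref{e:ODEa} as
\begin{equation*}
  \dot a_\eps = \frac{\lambda \hex a_\eps - 1}{\logep \, a_\eps}\,,
\end{equation*}
I first check that the hypothesis $\eps^\alpha \hex \logep \to 0$ guarantees $\lambda \hex \eps^\alpha < 1$ for all small enough~$\eps$, so the right-hand side is strictly negative at $t=0$. A short monotonicity argument then shows $a_\eps$ decreases, stays in $[0,\eps^\alpha]$, and the right-hand side blows down to $-\infty$ as $a_\eps \downarrow 0$, so the annihilation time $t_\eps$ is finite and well-defined.

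Next I separate variables, obtaining
\begin{equation*}
  t_\eps = \logep \int_0^{\eps^\alpha} \frac{a \, da}{1 - \lambda \hex a}\,.
\end{equation*}
For $\lambda > 0$, the partial fraction decomposition $\frac{a}{1-\lambda \hex a} = -\frac{1}{\lambda \hex} + \frac{1}{(\lambda \hex)^2} \cdot \frac{\lambda \hex}{1 - \lambda \hex a}$ makes the integral elementary, and I get the closed form
\begin{equation*}
  t_\eps = \frac{\logep}{(\lambda \hex)^2} \Bigl[ -\ln(1 - \lambda \hex \eps^\alpha) - \lambda \hex \eps^\alpha \Bigr]\,.
\end{equation*}
Since $\lambda \hex \eps^\alpha \to 0$ by hypothesis, the Taylor expansion $-\ln(1-x) - x = \tfrac{x^2}{2} + O(x^3)$ applied with $x = \lambda \hex \eps^\alpha$ yields
\begin{equation*}
  t_\eps = \frac{\eps^{2\alpha} \logep}{2} \bigl(1 + O(\lambda \hex \eps^\alpha)\bigr)\,,
\end{equation*}
which is precisely~\eqref{e:Tann}. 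The case $\lambda = 0$ is treated separately (or recovered as a limit): the ODE collapses to $a_\eps \dot a_\eps = -1/\logep$, integrating to $a_\eps(t)^2 = \eps^{2\alpha} - 2t/\logep$, whence $t_\eps = \eps^{2\alpha}\logep/2$ exactly.

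There is no real obstacle here; the proposition is essentially a quantitative evaluation of a separable ODE, and the only point worth emphasizing is that the hypothesis $\eps^\alpha \hex \logep \to 0$ is what forces the nonlinear correction $O(\lambda \hex \eps^\alpha)$ in the Taylor remainder to vanish, so that the leading constant $\tfrac12$ emerges cleanly in the limit.
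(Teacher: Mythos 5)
Your proposal is correct, and it reaches the goal by a slightly different, more elementary route than the paper. The paper solves~\eqref{e:ODEa} explicitly in terms of the principal branch $W_0$ of the Lambert $W$ function, obtaining the full trajectory $a_\eps(t)$, and then locates the annihilation time as the instant where $W_0=-1$; you instead treat the autonomous ODE by separation of variables and compute the time of flight directly, $t_\eps = \logep \int_0^{\eps^\alpha} \frac{a\,da}{1-\lambda\hex a}$, after the (correct) preliminary observation that the hypothesis forces $\lambda\hex\eps^\alpha<1$, that $a_\eps$ is strictly decreasing, and that it reaches $0$ in finite time. Both routes land on exactly the same closed form $t_\eps = \frac{\logep}{\lambda^2\hex^2}\bigl[-\ln(1-\lambda\hex\eps^\alpha)-\lambda\hex\eps^\alpha\bigr]$, and the final step—Taylor expanding $-\ln(1-x)-x=\tfrac{x^2}{2}+O(x^3)$ with $x=\lambda\hex\eps^\alpha\to 0$—is identical, as is the separate exact treatment of $\lambda=0$, where $t_\eps=\eps^{2\alpha}\logep/2$ matches~\eqref{e:TepLambdaEq0}. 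What your approach buys is self-containedness (no Lambert $W$ identities needed) and a proof that visibly only uses $\lambda\hex\eps^\alpha\to 0$, which the stated hypothesis $\eps^\alpha\hex\logep\to 0$ comfortably implies; what the paper's approach buys is the explicit formula for $a_\eps(t)$ itself, which is more information than the annihilation time alone. Either way the limit~\eqref{e:Tann} follows, so I see no gap.
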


The proof of Theorem~\ref{t:annihilation}
(presented in Section~\ref{s:AnnihilationTimes})
is similar to arguments found in \cite{SerfatyCollision2} in the gauge-free situation.
The proof of Proposition~\ref{p:Tann} follows quickly from well known properties of the Lambert~$W$ function, and is relegated to Appendix~\ref{s:ODEAnnTimes}.

\subsection{Stochastic Models for Driving Dipoles.}

In light of Theorem~\ref{t:annihilation}, one requires an applied field \emph{larger} than~$\exp( |\ln \eps|^{1/2} )$ for vortices to be pulled away from the boundary and nucleate.
This is an extremely large energy barrier to overcome.
Our aim is to introduce a small random perturbation into~\eqref{e:ODEa} to account for thermal fluctuations.
In this scenario, nucleation becomes a rare event, and our aim is to estimate the chance of nucleating.
The general idea of studying such asymptotics was introduced in~\cite{PontryaginAndronovEA33}, and has since been extensively studied by various authors (see for instance~\cite{FreidlinWentzell93,Kifer88}).

Explicitly, the equation we consider is
\begin{equation}\label{e:SDEa}
  da_\eps = -b_\eps(a_\eps) \, dt + \sqrt{2\beta_\eps} \, dW_t,
\end{equation}
where
\begin{equation} \label{e:drift}
  b_\eps(a) \defeq -\frac{\lambda \hex}{\abs{\ln \eps}} + \frac{1}{\abs{\ln \eps} a}
\end{equation}
is the right hand side of~\eqref{e:ODEa}, and $W$ is a standard Brownian motion and $\beta_\eps$ is a scaling parameter depending on $\eps$.
We remark that equation~\eqref{e:SDEa} is similar to the SDE satisfied by a Bessel process of dimension $1 + 1/(\beta_\eps \abs{\ln \eps})$.

The question of how stochastic forcing in the Gor'kov-\'Eliashberg equations can induce stochastic ODE's for the vortex position \eqref{e:SDEa}-\eqref{e:drift} is not well understood.  Some initial forays in this direction 
in the gauge-less case can be found in~\cite{Chugreeva}.

  Once vortices reach a distance of $\exp(- |\ln \eps|^{1/2})$ away from the boundary, we know (see~\cite{SSGamma}, briefly described in Section~\ref{s:Intro}) that they are driven into the interior and move into a stable, lower energy state.
  In our context, if the boundary vortex dynamics is approximated by~\eqref{e:ODEa}, then vortices which are a distance of $O(1 / \hex)$ will be pulled away from the boundary.
  Thus, in order to investigate nucleation, we study the chance that solutions to~\eqref{e:SDEa} starting a distance of~$\eps^\alpha$ away from the boundary, reach a distance $O(1 / \hex)$ \emph{before} annihilating (i.e.\ before $a_\eps = 0$, corresponding to the vortex with center $a_\eps$ reaching the boundary of the domain).

  Precisely, define the stopping time $\tau_\eps$ by
  \begin{equation}\label{e:tau}
    \tau_\eps = \inf\set{ t \:|\: a_\eps(t) \not\in (0, \hat A_\eps) } \,,
    \quad\text{where}\quad
    \hat A \defeq \frac{1}{\lambda \hex}\,.
  \end{equation}
  Since the vortices we consider have radius $\eps^\alpha$, the closest to the boundary they can start is a distance of $\eps^\alpha$ away.
  In this case the chance of nucleating one such vortex is
  \begin{equation*}
    \prob^{\eps^\alpha} \paren[\big]{ a_\eps(\tau_\eps) = \hat A_\eps } \defeq \prob\paren[\big]{ a_\eps(\tau_\eps) = \hat A_\eps \given a_\eps(0) = \eps^\alpha } \,.
  \end{equation*}

  On a bounded domain $\Omega$, we locally flatten the boundary and interpret $a_\eps(t)$ as the distance of the vortex from the boundary.
  In this case it is natural to consider the motion of many vortices simultaneously, near different points on the boundary.
  Since the size of these vortices is $O(\eps^\alpha)$, we can fit $O(\eps^{-\alpha})$ such vortices simultaneously on $\partial\Omega$.
  Assuming the motion of each of these vortices is independent, and governed by~\eqref{e:SDEa}, then the chance that \emph{at least} one of these vortices nucleates is given by
  \begin{equation}\label{e:Pnuc}
    N_\eps \defeq 1 - ( 1 - \prob^{\eps^\alpha}( a_\eps(\tau_\eps) = \hat A_\eps ) )^{\eps^{-\alpha}}.
  \end{equation}
  Under physically relevant assumptions on $\hex$, we show that the nucleation probability $N_\eps$ transitions from $0$ to $1$ at the threshold
  \begin{equation*}
    \beta_\eps \approx \frac{\alpha}{\ln \abs{\ln \hex} } \,.
  \end{equation*}
  This, along with more precise asymptotics, is our next result.
  \begin{theorem}\label{t:NucProb}
    Let $a_\eps$ solve the SDE~\eqref{e:SDEa}--\eqref{e:drift},
    \begin{equation}\label{e:C0}
      c_\eps \defeq \frac{\lambda \hex}{\abs{\ln \eps}}\,,
      \quad
      \hat A \defeq
	\frac{1}{\lambda \hex}
	\,,
    \end{equation}
    and $\tau_\eps$, defined by~\eqref{e:tau}, be the first exit time of $a$ from the interval $(0, \hat A_\eps)$.
    Suppose%
    \footnote{
      Clearly $\hex \approx \abs{\ln \eps}$, and $\hex \approx \exp\paren{|\ln \eps|^{1/2}}$ as in Theorem~\ref{t:annihilation} have the property that as~$\eps \to 0$, both $\hex \to \infty$ and $\eps^s \hex \to 0$ for all $s > 0$.
    }
    that as $\eps \to 0$, we have $\hex \to \infty$ and $\eps^s \hex \to 0$ for any $s > 0$.
    \begin{enumerate}
      \item If
	\begin{equation*}
	  \limsup_{\eps \to 0} \beta_\eps \ln \hex  < \alpha,
	\end{equation*}
	then the nucleation probability $N_\eps \to 0$ as $\eps \to 0$.

      \item On the other hand, if
	\begin{equation*}
	  \lim_{\eps \to 0} \beta_\eps = 0
	  \quad\text{and}\quad
	  \liminf_{\eps \to 0} \beta_\eps \ln \hex > \alpha,
	\end{equation*}
	then the nucleation probability $N_\eps \to 1$ as $\eps \to 0$.
    \end{enumerate}
  \end{theorem}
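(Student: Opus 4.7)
My plan is to reduce the theorem to a sharp estimate of the single-vortex exit probability $p_\eps \defeq \prob^{\eps^\alpha}(a_\eps(\tau_\eps) = \hat A_\eps)$, compute $p_\eps$ explicitly via the scale function of the one-dimensional diffusion~\eqref{e:SDEa}, and then run a short asymptotic analysis. Starting from~\eqref{e:Pnuc}, the inequality $1 - x \leqslant e^{-x}$ and the expansion $\ln(1-x) = -x + O(x^2)$ will give
\begin{equation*}
  N_\eps \to 0 \iff \eps^{-\alpha} p_\eps \to 0, \qquad N_\eps \to 1 \iff \eps^{-\alpha} p_\eps \to \infty,
\end{equation*}
so it suffices to establish the corresponding limits for $p_\eps$.

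For the SDE~\eqref{e:SDEa} with drift $-b_\eps$ and constant diffusion $\sqrt{2\beta_\eps}$, the scale function $s$ satisfies $\beta_\eps s'' = b_\eps s'$ with $s(0)=0$, so that $s'(y) \propto y^{\nu_\eps} e^{-c_\eps y / \beta_\eps}$ where $\nu_\eps \defeq 1/(\beta_\eps \abs{\ln \eps})$. The identity $c_\eps \hat A_\eps/\beta_\eps = \nu_\eps$ (from the definitions in~\eqref{e:C0}) together with the substitution $w = \lambda \hex y$ will convert $p_\eps = s(\eps^\alpha)/s(\hat A_\eps)$ into
\begin{equation*}
  p_\eps = \frac{\int_0^{\zeta_\eps} w^{\nu_\eps} e^{-\nu_\eps w}\,dw}{\int_0^1 w^{\nu_\eps} e^{-\nu_\eps w}\,dw}, \qquad \zeta_\eps \defeq \lambda \hex \eps^\alpha.
\end{equation*}
Since $\eps^s \hex \to 0$ for every $s > 0$, the cutoff $\zeta_\eps$ tends to $0$. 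Bounding $e^{-\nu_\eps w}$ between its two trivial constants on each interval of integration will yield the sandwich
\begin{equation*}
  (\nu_\eps + 1) \ln \zeta_\eps - \nu_\eps \zeta_\eps \leqslant \ln p_\eps \leqslant (\nu_\eps + 1) \ln \zeta_\eps + \nu_\eps.
\end{equation*}

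Substituting $\ln \zeta_\eps = -\alpha \abs{\ln \eps} + \ln \hex + O(1)$ and $\nu_\eps \abs{\ln \eps} = 1/\beta_\eps$, and subtracting $-\alpha\abs{\ln \eps}$, both sides of the sandwich collapse to
\begin{equation*}
  \ln(p_\eps/\eps^\alpha) = \paren[\Big]{\ln \hex - \frac{\alpha}{\beta_\eps}} + R_\eps, \qquad R_\eps = O\paren[\bigg]{\frac{1 + \ln \hex}{\beta_\eps \abs{\ln \eps}} + 1}.
\end{equation*}
The hypothesis $\eps^s \hex \to 0$ forces $\ln \hex = o(\abs{\ln \eps})$, from which $R_\eps = o(1/\beta_\eps)$ follows. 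Rewriting the main term as $(\beta_\eps \ln \hex - \alpha)/\beta_\eps$, case~(2) (in which $\liminf \beta_\eps \ln \hex > \alpha$ and $\beta_\eps \to 0$) drives $\ln(p_\eps/\eps^\alpha) \to +\infty$, while case~(1) ($\limsup \beta_\eps \ln \hex < \alpha$, which combined with $\hex \to \infty$ forces $\beta_\eps \to 0$) drives $\ln(p_\eps/\eps^\alpha) \to -\infty$, completing the reduction.

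The argument is essentially calculus applied on top of the one-dimensional scale-function formula, and no Laplace-type finer asymptotics are needed. The one point requiring care is verifying that the error $R_\eps$ is dominated by the main term in \emph{every} regime of $\nu_\eps$, including when $\nu_\eps \to \infty$ (which can occur in case~(1) if $\beta_\eps \abs{\ln \eps} \to 0$); this is precisely where the quantitative bound $\ln \hex = o(\abs{\ln \eps})$, rather than merely $\hex \to \infty$, is used.
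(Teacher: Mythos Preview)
Your argument is correct. Both you and the paper start from the scale function of the one–dimensional diffusion and express $p_\eps$ as a ratio of integrals of $x^{\nu_\eps}e^{-\nu_\eps x}$, and both reduce the question about $N_\eps$ to the behaviour of $\eps^{-\alpha}p_\eps$. The difference is in the asymptotic analysis of that ratio. The paper writes $p_\eps$ via the lower incomplete gamma function $\gamma(n_\eps+1,\cdot)$ and splits into three cases according to whether $n_\eps=\nu_\eps=1/(\beta_\eps\abs{\ln\eps})$ tends to $\infty$, to a finite limit, or to $0$; the first case uses Stirling's formula together with the identity $\lim_n\gamma(n+1,n)/n!=\tfrac12$. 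Your substitution $w=\lambda\hex y$ (which fixes the denominator's interval at $[0,1]$ rather than $[0,n_\eps]$) lets you replace all of this by the single crude sandwich $e^{-\nu_\eps w}\in[e^{-\nu_\eps},1]$, because the resulting error $\nu_\eps$ always satisfies $\nu_\eps=o(1/\beta_\eps)$ simply from $\abs{\ln\eps}\to\infty$. This buys a uniform, case-free argument and avoids special-function asymptotics entirely; the paper's route, on the other hand, makes the connection to the incomplete gamma function explicit and gives slightly more information in the transition regime (its equation in the final case is exactly your main term $\ln\hex-\alpha/\beta_\eps$ plus the piece $\tfrac{\ln(\lambda\hex)}{\beta_\eps\abs{\ln\eps}}$ that you absorb into $R_\eps$). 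Your observation that $\limsup\beta_\eps\ln\hex<\alpha$ together with $\hex\to\infty$ already forces $\beta_\eps\to0$ is a nice point the paper does not isolate.
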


  We remark that $\hat A_\eps$ in~\eqref{e:C0} is chosen so that $a = \hat A_\eps$ is a stable equilibrium of the ODE~\eqref{e:ODEa}.
  The proof of Theorem~\ref{t:NucProb} is in Section~\ref{s:stochastic}, and also provides asymptotics in the transition regime when
  \begin{equation*}
    \beta_\eps \ln \hex \to \alpha \,,
  \end{equation*}
  In this case limiting value of $N_\eps$ depends on the rate of convergence and is described in Remark~\ref{r:Nprob}, below.

  We conclude this section with the description of an open question that is more physically realistic.
  Consider a stochastically forced version of the full Gor'kov-\'Eliashberg equations~\eqref{e:GE1}--\eqref{e:GE4}, instead of the heuristic ODE~\eqref{e:ODEa}.
  The noise should spontaneously generate vortices, and due to topological constraints these vortices will appear either near the boundary, or as dipoles with small inter vortex separation.
  Using the heuristic ODE~\eqref{e:ODEa} and Theorem~\ref{t:NucProb} we expect that when the noise is strong enough, these vortices (or dipoles) will nucleate providing a mechanism by which the system ``tunnels'' to a lower energy state.
  This leads us to make the following conjecture.

  \begin{conjecture}
    Consider a stochastically forced version of~\eqref{e:GE1}--\eqref{e:GE4}.
    If the forcing is strong enough, the system admits a unique invariant measure for all $\eps > 0$.
    In this case,  the invariant measure converges weakly as~$\eps \to 0$ to a measure supported on the set of all functions that are limits of global minimizers of the Ginzburg-Landau energy functional.
    Depending on the relation between $\hex$ and $h_{c_1}$ such functions correspond to the purely superconducting state, or a nucleated state with finitely many vortices.
  \end{conjecture}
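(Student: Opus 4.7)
The plan is to address the two assertions of the conjecture separately: existence and uniqueness of an invariant measure $\mu_\eps$ at each fixed $\eps > 0$, and weak convergence of the family $\{\mu_\eps\}$ as $\eps \to 0$ to a measure concentrated on limits of global minimizers of $G_\eps$.

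For the first assertion, I would fix the Coulomb gauge $\nabla \cdot A_\eps = 0$ to reduce the stochastic Gor'kov--\'Eliashberg system to a well-posed dissipative SPDE on a phase space $X \defeq H^1(\Omega; \C) \times H^1_{\mathrm{div}}(\Omega; \R^2)$, driven by (say) additive noise with trace-class covariance $Q$. Applying It\^o's formula to $G_\eps$ and using~\eqref{e:enlaw} supplemented by the It\^o correction yields
\begin{equation*}
  \E\, G_\eps(u_\eps(t), A_\eps(t)) + \E \int_0^t \paren[\big]{ \norm{\partial_{\Phi_\eps} u_\eps}_{L^2}^2 + \norm{E_\eps}_{L^2}^2 } \, ds \leqslant G_\eps(u_\eps^0, A_\eps^0) + C(Q) t \,,
\end{equation*}
and a standard absorption argument (exploiting the maximum-principle-type bound $|u_\eps| \leqslant 1$) upgrades this to a uniform-in-time estimate, yielding tightness of time averages. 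Krylov--Bogoliubov then furnishes an invariant measure. For uniqueness I would invoke the Hairer--Mattingly program: establish asymptotic strong Feller (via Bismut--Elworthy--Li if the noise is non-degenerate on enough modes, or via Malliavin calculus with a H\"ormander condition if hypoelliptic) together with topological irreducibility, which follows from a control argument that steers the system into a neighborhood of the Meissner state using the dissipative structure of the deterministic part.

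For the second assertion, the natural framework is Freidlin--Wentzell large deviations combined with the $\Gamma$-convergence program of Sandier--Serfaty \cites{SSGamma,SSGlobal}. First, the uniform energy bound from the first part transfers into tightness of $\{\mu_\eps\}$ on a suitable topology (e.g., weak convergence of Jacobians $J(u_\eps)$ in $(C^{0,\gamma}_0(\Omega))^*$). Next, any weak subsequential limit $\mu$ should be characterized by two ingredients: after subtracting $G_m(\hex)$ and rescaling by $\logep$, the functional $G_\eps$ $\Gamma$-converges to a reduced functional $G_0$ whose minimizers are exactly the configurations described in the conjecture; and, for noise intensity that vanishes slowly enough that Theorem~\ref{t:NucProb} guarantees nucleation is not suppressed, the quasi-potential between distinct local minima controls metastable exit times in a Freidlin--Wentzell sense, forcing $\mu$ to concentrate on $\operatorname{argmin} G_0$. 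The dichotomy is then read off from whether $\hex < h_{c_1}$ (in which case $\operatorname{argmin} G_0$ is the pure Meissner state) or $\hex > h_{c_1}$ (in which case it consists of finitely many vortices sitting at the minima of $\xi_m$).

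The principal obstacles are threefold. First, verifying the H\"ormander/asymptotic strong Feller condition for the gauged system is genuinely delicate because of the nonlinear coupling between $u_\eps$ and $A_\eps$ and the degeneration of the phase dynamics near zeros of $u_\eps$. Second, the two small parameters $\eps$ and the noise strength $\beta_\eps$ must be balanced: the clause ``if the forcing is strong enough'' has to be made quantitative in a regime compatible with Theorem~\ref{t:NucProb}, so that the noise is strong enough to induce ergodicity and trigger nucleation yet weak enough that concentration on global minima survives in the limit. Third, and most importantly, transferring static $\Gamma$-convergence information to a statement about invariant measures of an infinite-dimensional SPDE requires quantitative metastability estimates relating barrier heights (such as the $\pi(1-\alpha)\logep$ appearing in Theorem~\ref{t:annihilation}) to Boltzmann-type weights for $\mu_\eps$, which is well beyond the state of the art in this setting and constitutes the main reason the result is stated only as a conjecture.
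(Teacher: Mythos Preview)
The paper does not prove this statement: it is explicitly labeled a Conjecture and left open. The only discussion the paper offers is the paragraph immediately following it, which says that existence and uniqueness of the invariant measure are ``likely to be amenable to currently available techniques,'' while the $\eps\to 0$ limit of the invariant measure is ``more delicate'' because in infinite dimensions there may be a continuum of global minima. So there is no proof in the paper for your proposal to be compared against.

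That said, your outline is entirely consonant with the paper's informal remarks. Your first assertion (Krylov--Bogoliubov for existence, Hairer--Mattingly for uniqueness) is exactly the kind of ``currently available technique'' the authors allude to. Your second assertion (Freidlin--Wentzell large deviations combined with the Sandier--Serfaty $\Gamma$-convergence) is a reasonable program, and your list of obstacles --- particularly the third one, transferring static $\Gamma$-limit information to concentration of invariant measures for an infinite-dimensional SPDE --- matches precisely the difficulty the paper flags. You are also right to tie the noise-strength threshold to Theorem~\ref{t:NucProb}; the paper makes the same connection, guessing the threshold to be variance of order $1/\ln\hex$.

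In short: your proposal is not a proof and does not claim to be one, which is the correct stance, since neither does the paper. The research program you sketch is sound as a strategy and agrees with the authors' own assessment of where the hard part lies.
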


  In light of Theorem~\ref{t:NucProb} one would guess that the above conjecture holds when the variance of the noise is at least~$O(1/\ln \hex )$.
  However, this would require the stochastic forcing to spontaneously nucleate enough dipoles (or vortices near the boundary).
  Moreover, truly nonlinear effects may change the threshold significantly.

  Proving existence (and possibly uniqueness) of the invariant measure is likely to be amenable to currently available techniques.
  Understanding the limiting behaviour of the invariant measure, however, is more delicate.
  In finite dimensions, the small noise limit of the invariant measure of a randomly perturbed potential flow is a sum of delta masses located at the global minima of the potential, with the relative mass at each minima depending on its basin of attraction.
  In infinite dimensions the situation is more complicated as there may be a continuum of global minima, and understanding the limiting behaviour is much more involved.

  The remainder of this paper is devoted to proving the results stated in this section.

\section{Formal Derivation of  Boundary Vortex Dynamics.}\label{s:derivation}

The purpose of this section is to provide a short, heuristic derivation of \eqref{e:ODEa}.
Recall that a standard calculation (see for example \cite{Du}) shows that
\begin{align}
\label{e:energyid1}
\partial_t g_\eps (u_\eps,A_\eps) + \abs{ \partial_{\Phi_\eps} u_\eps }^2 + \abs{E_\eps}^2 & = \dv \paren{ \partial_{\Phi_\eps} u_\eps , \nablaAe u_\eps } + \curl \paren{ E_\eps \paren{ h_\eps - \hex } } \,,
\end{align}
where $g_\eps(u,A)$, defined by
\begin{align*}
g_\eps(u,A) & \defeq \frac{1}{2} \abs{ \nablaA u }^2 + \frac{1}{2} \abs{ h - \hex }^2 + \frac{1}{4\eps^2} \paren[\big]{ 1 - |u|^2 }^2 \,,
\end{align*}
is the energy density associated to $G_\eps(u,A)$.
We will split this energy density into simpler terms.
Following Bethuel et.\ al.~\cite{BBH}, let%
\begin{align*}
\Ee (u) \defeq \int_\Omega e_\eps(u) \, dx \,,
\quad\text{where}\quad
e_\eps(u) & \defeq \frac{1}{2} \abs{ \nabla u }^2  + \frac{1}{4\eps^2} \paren[\big]{ 1 - |u|^2 }^2 \,.
\end{align*}
as introduced and studied by Bethuel et.\ al.~\cite{BBH}.

Suppose now that our domain $\Omega$ is the half ball $\Omega = B_1(0) \cap \R^2_+$,
and consider a vortex located at $(0, a_\eps(t))$ at time $t$, where $a_\eps(0) = a_\eps^0 = \eps^\alpha$.
From~\cite{SerfatyLocal} we know
\begin{align*}
G_\eps(u_\eps, A_\eps) = G_m(\hex) + \Ee(u_\eps) + 2 \pi \hex \int_\Omega \xi_m  J(u_\eps) dx +  \text{ lower order terms} \,,
\end{align*}
where $G_m(\hex)$ is the Meissner energy associated to applied field $\hex$.
We can approximate the energy $\Ee(u_\eps)$ by
\[
  \Ee(u_\eps) = \pi \ln \frac{ a_\eps(t) }{ \eps} +
\text{ lower order terms} \,.
\]
Combining the previous two equations, and using the fact that on small scales $J(u_\eps)$ is concentrated at the site of the vortex (see~\cite{JerrardSoner}), yields
\begin{align*}
G_\eps(u_\eps, A_\eps) = \pi \ln \frac{ a_\eps(t) }{ \eps} + 2 \pi \hex \, \xi_m(0,a_\eps(t)) + G_m(\hex) +
\text{ lower order terms} \,.
\end{align*}

On the other hand one can formally show that
\[
\int_0^t \int_\Omega \paren[\Big]{ \abs{ \partial_{\Phi_\eps} u_\eps }^2 + \abs{E_\eps}^2 } \, dx \, ds =  \int_0^t \pi \ln \frac{a_\eps(s) }{ \eps} \abs{ \dot a_\eps (s) }^2 ds +
\text{ lower order terms}.
\]
Combined with~\eqref{e:enlaw} this yields
\begin{equation} \label{e:ODEenergy}
\begin{split} 
& \brak[\Big]{ \pi  \log \frac{a_\eps(t) }{ \eps}  + 2 \pi \hex\, \xi_m( 0, a_\eps(t) ) } + \int_0^t \pi  \ln \frac{a_\eps(s) }{ \eps} \abs{ \dot a_\eps (s) }^2 ds \\
& \qquad \qquad = \brak[\Big]{ \pi  \log \frac{a_\eps^0 }{ \eps} + 2 \pi \hex\,  \xi_m (a_\eps^0) } +
\text{ lower order terms}.
\end{split}
\end{equation}
Differentiating \eqref{e:ODEenergy} in time and neglecting the lower order terms yields the ODE
\begin{equation*} 
 \dot a_\eps \ln \frac{a_\eps }{ \eps}=  -2 \hex\, \partial_y \xi_m (0, a_\eps)  - \frac{1}{a_\eps} .
\end{equation*}
Using~\eqref{e:Meissner} and the Hopf lemma, we know that the outward normal derivative of $\xi_m$ is strictly positive on the boundary.
Thus, to leading order, we obtain~\eqref{e:ODEa}.
\section{Dipole Annihilation Times.}\label{s:AnnihilationTimes}


The main goal in this section is to prove Theorem~\ref{t:annihilation}.
We do this through the following $\eta$-compactness result.

\begin{proposition}[$\eta$-compactness] \label{p:etacpt}
Fix $C_1, C_2 > 0$ and suppose that
\begin{equation*}
  C_1 \logep \leqslant \hex \leqslant C_2 \exp\paren[\big]{\logep^{1/2}}\,.
\end{equation*}
Let $(u_\eps(t), A_\eps(t), \Phi_\eps(t))$  be a solution to \eqref{e:GE1}-\eqref{e:GE4} 
under a Coulomb gauge that satisfies 
\[
  \abs[\big]{ G_\eps(u_\eps^0, A^0_\eps) - G_{m} } \leqslant \eta \logep + C
\]
for some constants $C>0$ and $\eta \in(0, \pi)$.
If further
\begin{equation} \label{e:JacCond}
  \norm*{ J(u^0_\eps) }_{(C_0^{0,\gamma})^*} =o_\eps\paren[\Big]{ \frac{\logep }{ \hex} }
\end{equation}
for some $0 < \gamma \leqslant 1$, then for any $\delta < { 2 - \frac{2 \eta }{ \pi}}$, there exists a time $ t_\eps \in (\frac{\eps^\delta}{2}, \eps^\delta)$ such that~\eqref{e:annihilated} holds.
\end{proposition}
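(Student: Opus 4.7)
The plan is to combine the diffusive energy identity \eqref{e:enlaw} with Serfaty's energy decomposition and a scale-aware $\eta$-ellipticity argument, using the Coulomb gauge to propagate the initial smallness of the Jacobian to a carefully chosen later time. First, inserting the initial energy bound into \eqref{e:enlaw} gives
\[
  \int_0^{\eps^\delta} \int_\Omega \paren[\big]{ \abs{ \partial_{\Phi_\eps} u_\eps }^2 + \abs{E_\eps}^2 } \, dx \, ds \leqslant \eta \logep + C,
\]
and a pigeonhole argument in $t \in (\eps^\delta/2, \eps^\delta)$ produces a ``good time'' $t_\eps$ at which the instantaneous dissipation is bounded by $C (\logep)\,\eps^{-\delta}$. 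This in turn controls how fast the gauge-invariant current, and hence the Jacobian, can vary.

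The second step is to propagate the Jacobian smallness \eqref{e:JacCond} to time $t_\eps$. Using $J(u_\eps) = \tfrac{1}{2} \curl j_{A_\eps}(u_\eps)$ and its evolution obtained from \eqref{e:GE1}--\eqref{e:GE2}, Cauchy--Schwarz against the time-integrated dissipation, and duality with $C_0^{0,\gamma}$ test functions, one shows
\[
  \norm[\big]{ J(u_\eps(t_\eps)) - J(u^0_\eps) }_{(C_0^{0,\gamma})^*} = o_\eps\paren[\Big]{ \frac{\logep}{\hex} }.
\]
Combined with \eqref{e:JacCond} this gives smallness of $J(u_\eps(t_\eps))$ of the same order. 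Serfaty's decomposition from \cite{SerfatyLocal},
\[
  G_\eps(u_\eps, A_\eps) - G_m(\hex) = \Ee(u_\eps) + 2\pi \hex \int_\Omega \xi_m \, J(u_\eps) \, dx + o_\eps(\logep),
\]
then has middle term bounded by $2\pi \hex \, \norm{J(u_\eps)}_{(C_0^{0,\gamma})^*} \norm{\xi_m}_{C^{0,\gamma}} = o_\eps(\logep)$, while the left-hand side is at most $\eta \logep + C$ by \eqref{e:enlaw}. Hence $\Ee(u_\eps(t_\eps)) \leqslant \eta \logep + o_\eps(\logep)$.

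Since $\eta < \pi$, the hypothesis $\delta < 2 - 2\eta/\pi$ permits choosing a radius $r_\eps = \eps^{1 - \eta/\pi - \delta'}$ with $\delta' > 0$ small, on which $\int_{B_{r_\eps}(x)} e_\eps < \pi \ln(r_\eps/\eps)$ for every $x \in \Omega$. The standard $\eta$-ellipticity lemma of Bethuel--Brezis--H\'elein (combined near the boundary with a reflection) then forces $|u_\eps(t_\eps, x)| \to 1$ uniformly in $x$, which is exactly \eqref{e:annihilated}.

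The main obstacle is the Jacobian propagation in Step 2. In the gauge-free setting of \cite{SerfatyCollision2} this is classical, but here the potentials $(A_\eps, \Phi_\eps)$ contribute cross terms in $\partial_t J(u_\eps)$, and the Coulomb gauge together with parabolic estimates on the electromagnetic fields must be used to absorb them. The shrinking time scale $\eps^\delta$ makes the quantitative control delicate, since the only global estimate on the dissipation is in $L^2_{t,x}$, and the initial smallness \eqref{e:JacCond} must not be swamped by the accumulated error.
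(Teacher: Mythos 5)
There is a genuine gap in your final step, and it is exactly the step that carries the difficulty of the proposition. You try to conclude $\norm{1-\abs{u_\eps(t_\eps)}}_{L^\infty}=o_\eps(1)$ from the bound $\Ee(u_\eps(t_\eps))\leqslant \eta\logep+o_\eps(\logep)$ alone, via a BBH-type $\eta$-ellipticity on balls of radius $\eps^{1-\eta/\pi-\delta'}$ (with a boundary reflection). This cannot work: a vortex at distance $\eps^\mu$ from $\partial\Omega$ (or a dipole with separation $\eps^\mu$) costs only about $\pi(1-\mu)\logep$, so low energy is perfectly compatible with the presence of vortices; indeed the \emph{initial data} satisfies every bound you have at time $t_\eps$ (energy $\leqslant\eta\logep+C$, small Jacobian norm) and yet contains vortices, so no argument using only these quantities can yield the conclusion. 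There is also a numerological failure in your ball argument: your global bound $\eta\logep$ does not fit under the local threshold $\pi\ln(r_\eps/\eps)=(\pi-\eta-\pi\delta')\logep$ once $\eta\geqslant\pi/2$, and clearing-out lemmas anyway require smallness of the local energy (or of the potential term) below a threshold that a vortex core always violates. A secondary issue: your dissipation bound $\int_0^{\eps^\delta}\!\!\int(\abs{\partial_{\Phi_\eps}u_\eps}^2+\abs{E_\eps}^2)\leqslant\eta\logep+C$ presumes $G_\eps(t)\geqslant G_m$, which is not justified; the paper only uses the cruder bound by $G_\eps(u_\eps^0,A_\eps^0)\leqslant C\hex^2$, and this is what feeds the subsequent exponents.

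What the paper actually does after the splitting step (your Steps 1--3 do match Proposition~\ref{p:split} in spirit) is to exploit the \emph{elliptic} structure at the good time: writing $\Delta u_\eps+\eps^{-2}u_\eps(1-\abs{u_\eps}^2)=f_\eps$ with $f_\eps=2iA_\eps\cdot\nablaAe u_\eps+\abs{A_\eps}^2u_\eps-\partial_{\Phi_\eps}u_\eps$, the pigeonholed time $t_\eps\in(\eps^\delta/2,\eps^\delta)$ together with an $L^\infty$ bound on $A_\eps$ (Coulomb gauge, Hodge decomposition, elliptic regularity) gives $\norm{f_\eps(t_\eps)}_{L^2}\leqslant C\eps^{-\delta/2}\hex$. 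Serfaty's structure result (Proposition~\ref{p:serfaty}) with the energy bound then forces $\sum_j d_j^2\in\{0,1\}$, i.e.\ at most one vortex survives; and if a single vortex remains, the upper bound $\Ee(u_\eps(t_\eps))\leqslant\eta\logep+o_\eps(\logep)$ forces it to lie within $\eps^\mu$ of $\partial\Omega$ with $\mu\geqslant 1-\eta/\pi$, whereupon the lower bound $\norm{f_\eps(t_\eps)}_{L^2}\geqslant C\eps^{-\mu}/\logep$ from Theorem~2 of \cite{SerfatyCollision1} contradicts the upper bound precisely because $\delta<2-2\eta/\pi\leqslant 2\mu$. This interplay between the dissipation at the good time and the cost of a near-boundary vortex is the missing idea in your proposal; without it (or a substitute of comparable strength) the conclusion does not follow.
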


Momentarily postponing the proof of Proposition~\ref{p:etacpt}, we prove Theorem~\ref{t:annihilation}.

\begin{proof}[Proof of Theorem~\ref{t:annihilation}]
%
We first consider the case where the initial data is a dipole with separation~$\eps^\alpha$ (i.e.\ satisfies~\eqref{e:IDDipole}).
In this case,
\begin{align*}
\| J(u^0_\eps) \|_{(C^{0,\gamma}_0)^*} 
& \leqslant \norm[\Big]{ J(u^0_\eps) - \pi \paren[\big]{ \delta_{a_\eps^{0,+}} - \delta_{a_\eps^{0,-}}} }_{(C^{0,\gamma}_0)^*}
  + \pi \norm[\Big]{  \delta_{a_\eps^{0, +}} - \delta_{a_\eps^{0,-}} }_{(C^{0,\gamma}_0)^*} \\
& = o_\eps \paren[\Big]{ \frac{\logep }{ \hex} } + \pi \abs[\big]{a_\eps^{0,+} - a_\eps^{0,-}}^\gamma  =o_\eps\paren[\Big]{\frac{\logep}{\hex}} \,.
\end{align*}
Thus, Proposition~\ref{p:etacpt} guarantees that for any $\delta \in [0, 2 - 2\eta / \pi)$, there exists a $t_\eps < \eps^\delta$ such that equation~\eqref{e:annihilated} holds.
Since $\eta = \pi \paren{ 1 - \alpha }$, the restriction $\delta < 2 - 2\eta / \pi$ is precisely $\delta < 2 \alpha$.
Taking the infimum of the times $t_\eps$ as $\delta \to 2\alpha$ will guarantee the existence of a time in the interval $[0, \eps^{2\alpha}]$ for which~\eqref{e:annihilated} holds.
This proves Theorem~\ref{t:annihilation} in the case that~\eqref{e:IDDipole} holds.

The proof when the initial data is a vortex located a distance of $\eps^\alpha$ away from the boundary (i.e.\ when~\eqref{e:IDBoundary} is satisfied) is similar.
Indeed, let $\varphi \in C^{0,\gamma}_0$ and $y \in \partial \Omega$ be the point that is closest to the vortex center~$a_\eps^0$, and observe
\begin{equation*}
  \abs[\Big]{ \int_\Omega \varphi \delta_{a_\eps^0} \, dx } = \abs[\big]{  \varphi(a_\eps^0) - \varphi(y) } \leqslant \dist(a_\eps^0, \partial \Omega)^\gamma \| \varphi \|_{C^{0,\gamma}_0} \,.
\end{equation*}
Thus
$\| \delta_{a_\eps^0} \|_{(C_0^{0,\gamma})^*} = O(\eps^{\alpha \gamma})$,
and hence
\begin{equation*}
\| J(u_\eps^0) \|_{(C_0^{0,\gamma})^*} \leqslant \| J(u_\eps^0) -  \delta_{a_\eps^0} \|_{(C_0^{0,\gamma})^*} + \| \delta_{a_\eps^0} \|_{(C_0^{0,\gamma})^*} 
= o_\eps\paren[\Big]{ \frac{\logep}{\hex}} \,.
\end{equation*}
Now Proposition~\ref{p:etacpt}, and the same argument as in the previous case, finishes the proof.
\end{proof}

The remainder of this section is devoted to the proof of Proposition~\ref{p:etacpt}.
We begin by recalling a regularity result from~\cites{DuGlobal,SpirnGE}.

\begin{lemma}[Lemma 3.7 in~\cite{DuGlobal} or Propositions 2.7--2.8 in~\cite{SpirnGE}]\label{l:regularity}
Let $(u_\eps,A_\eps, \Phi_\eps)$ be a solution to \eqref{e:GE1}--\eqref{e:GE4} under the Coulomb gauge with $\norm{u^0_\eps}_{L^\infty} \leqslant 1$, $\norm{ \nabla u^0_\eps }_{L^\infty} \leqslant \frac{C }{ \eps}$ and $G_\eps(u^0_\eps, A_\eps^0) \leqslant C \hex^2$.
Then we have%
\begin{align}
\label{e:uLinfty}
\norm{ u_\eps(t) }_{L^\infty} & \leqslant 1\,, \\
\label{e:graduLinfty}
\norm{ \nabla u_\eps(t) }_{L^\infty} & \leqslant \frac{C }{ \eps} \,,
\end{align}
for all $t \geqslant 0$.
\end{lemma}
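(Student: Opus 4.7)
The plan is to separate the two bounds. The $L^\infty$ bound on $u_\eps$ is a parabolic maximum principle argument, while the gradient bound requires a rescaling and parabolic regularity bootstrap.

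For the first bound, I would derive a scalar parabolic inequality for $w \defeq \abs{u_\eps}^2 - 1$. Taking the real inner product of \eqref{e:GE1} with $u_\eps$, and noting that $(i \Phi_\eps u_\eps, u_\eps) = 0$ and $(u_\eps, \nablaAe^2 u_\eps) = \tfrac12 \lap \abs{u_\eps}^2 - \abs{\nablaAe u_\eps}^2$, I obtain
\begin{equation*}
  \tfrac12 \partial_t w - \tfrac12 \lap w + \abs{\nablaAe u_\eps}^2 = \frac{1}{\eps^2} \abs{u_\eps}^2 \paren{-w}.
\end{equation*}
The Neumann condition $\nu \cdot \nablaAe u_\eps = 0$ on $\partial \Omega$ implies $\nu \cdot \nabla \abs{u_\eps}^2 = 0$, so $w$ satisfies a homogeneous Neumann condition. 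Since $w(0) \leqslant 0$ by hypothesis and the zero-order term has the right sign whenever $w > 0$, the standard parabolic maximum principle yields $w(t) \leqslant 0$ for all $t \geqslant 0$, which is \eqref{e:uLinfty}.

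For the gradient bound, I would pass to the natural Ginzburg-Landau scaling $\tilde u(\tau, y) \defeq u_\eps(\eps^2 \tau, \eps y)$, $\tilde A(\tau, y) \defeq \eps A_\eps(\eps^2 \tau, \eps y)$, $\tilde \Phi(\tau, y) \defeq \eps^2 \Phi_\eps(\eps^2 \tau, \eps y)$. Under this rescaling the Gor'kov-\'Eliashberg system becomes an $\eps$-independent semilinear parabolic system on the expanding domain $\eps^{-1} \Omega$, and \eqref{e:uLinfty} gives $\norm{\tilde u}_{L^\infty} \leqslant 1$ uniformly. In the Coulomb gauge $\dv \tilde A = 0$, the equation for $\tilde A$ decouples at the level of its principal part into a heat equation with right hand side $j_{\tilde A}(\tilde u) + \curl(h - \hex)$, and the equation for $\tilde u$ is a quasilinear heat equation with bounded zero-order nonlinearity. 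The hypothesis $G_\eps \leqslant C \hex^2$ rescales to control $\tilde \nabla_{\tilde A} \tilde u$ and $\curl \tilde A - \eps \hex$ in $L^2$ on unit balls, and since $\eps^s \hex \to 0$ for any $s > 0$, the $L^2$ data for the rescaled problem is bounded. I would then apply standard parabolic $L^p$-estimates, followed by Schauder estimates, on unit balls centered at an arbitrary point, up to the rescaled boundary. This gives $\norm{\tilde \nabla \tilde u}_{L^\infty} \leqslant C$ uniformly in $\eps$, which unrescales to \eqref{e:graduLinfty}.

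The main obstacle is the coupled gauge structure: the right-hand sides of the rescaled $u$ and $A$ equations involve each other, and one has to iterate the parabolic estimates carefully so that the bootstrap closes with constants independent of $\eps$. In particular, the initial bound $\norm{\nabla u_\eps^0}_{L^\infty} \leqslant C/\eps$ is used to start the iteration on a short rescaled time interval, and parabolic smoothing then takes over for $\tau \gtrsim 1$; one must also verify that the Coulomb-gauge Poisson equation for $\Phi_\eps$ obtained from $\dv E_\eps = 0$ does not introduce any loss, which follows from elliptic estimates together with the $L^2$ bound on $E_\eps$ inherited from \eqref{e:enlaw}. The Neumann-type boundary conditions introduce no essential difficulty since $\partial \Omega$ is smooth and, after rescaling, almost flat on unit scales.
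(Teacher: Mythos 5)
The paper does not actually prove this lemma: it is quoted from \cite{DuGlobal} (Lemma~3.7) and \cite{SpirnGE} (Propositions~2.7--2.8), and the only in-paper content is the remark that those proofs ``adjust easily'' from $O(\logep)$ energies and the parabolic gauge to the $O(\hex^2)$ energy level and the Coulomb gauge. So your proposal is being compared against the standard arguments in those references rather than against a written proof here. Your first half is exactly that standard argument and is correct: the identity $(u_\eps,\nablaAe^2 u_\eps)=\tfrac12\lap\abs{u_\eps}^2-\abs{\nablaAe u_\eps}^2$, the vanishing of $(i\Phi_\eps u_\eps,u_\eps)$, the Neumann condition for $\abs{u_\eps}^2$ inherited from $\nu\cdot\nablaAe u_\eps=0$, and the sign of the zero-order term give \eqref{e:uLinfty} by the parabolic maximum principle.

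The second half has the right skeleton (parabolic rescaling $\tilde u(\tau,y)=u_\eps(\eps^2\tau,\eps y)$ plus interior/boundary $W^{2,1}_p$ and Schauder estimates, seeded for short times by $\norm{\nabla u^0_\eps}_{L^\infty}\leqslant C/\eps$), but two points need repair. First, the claim that ``the $L^2$ data for the rescaled problem is bounded'' is false as stated: the energy hypothesis only gives $\norm{\nablaAe u_\eps}_{L^2}\leqslant C\hex$, which is scale invariant in two dimensions, so on unit parabolic cylinders the rescaled data is $O(\hex^2)$, and $\hex\to\infty$; the condition $\eps^s\hex\to 0$ only makes this divergence sub-polynomial, it does not make it bounded, so every constant in the bootstrap must be tracked against this growth. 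Second, and more importantly, the step that actually carries the lemma --- uniform control of the rescaled potentials $\eps A_\eps$ and $\eps^2\Phi_\eps$ in norms strong enough ($L^\infty$, or $L^p$ with large $p$, on unit cylinders) to close the parabolic estimates for $\tilde u$ --- is asserted (``iterate the parabolic estimates carefully so that the bootstrap closes'') rather than proved. In the Coulomb gauge this requires elliptic/parabolic estimates for $A_\eps$ from $\partial_t A_\eps-\nabla\Phi_\eps=\lap A_\eps+j_{A_\eps}(u_\eps)$ and for $\Phi_\eps$ from $-\lap\Phi_\eps=\dv j_{A_\eps}(u_\eps)$ (note $\dv E_\eps$ is \emph{not} zero; it equals $\dv j_{A_\eps}(u_\eps)$), all at the energy level $O(\hex^2)$; this is precisely the content of the cited propositions and of the paper's remark about what must be adjusted, so as written your argument leaves the genuinely delicate part of the lemma unestablished.
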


\begin{remark*}
  The hypotheses in \cite{DuGlobal} and~\cite{SpirnGE} for~\eqref{e:uLinfty} and~\eqref{e:graduLinfty} respectively, are for smaller energies (when $G_\eps(0) = O(\logep)$) and under the parabolic gauge.
  The proofs, however, can be easily adjusted to the higher energy level $O(\hex^2)$ and the Coulomb gauge as stated in Lemma~\ref{l:regularity} above.
\end{remark*}

The main step in the proof of Proposition~\ref{p:etacpt} is an energy-splitting argument, which we now describe.
Define the free energy $\GF(u, A)$ by
\begin{equation} \label{e:freeGLen}
  \GF(u,A) \defeq \int_\Omega \paren[\Big]{ \frac{1}{2} \abs{ \nablaA u }^2 + \frac{1}{2} \abs{ \curl A }^2 + \frac{1}{4 \eps^2} \paren{ 1 - |u|^2 }^2 } \, dx \,.
\end{equation}
Even though $G_\eps(u_\eps,A_\eps)$ is of order $O(\hex^2)$, we claim that the $\Ee(u_\eps)$ is of order $O(\logep)$ for short time.
This is our next result.

\begin{proposition}\label{p:split}
Suppose that $(u_\eps(t), A_\eps(t), \Phi_\eps(t))$ is a solution to \eqref{e:GE1}-\eqref{e:GE4} in the Coulomb gauge
with $|G_\eps(u^0_\eps, A^0_\eps)  - G_m|  \leqslant \eta \logep$.  If $\norm{ J(u_\eps^0) }_{(C_0^{0,\gamma})^*}= 
o_\eps(\frac{\logep }{ \hex})$ for 
some $0 < \gamma \leqslant 1$, then for all  $0 \leqslant t \ll \paren[\big]{ \frac{\logep }{ \hex^3} }^{ {2 / \gamma }}$ we have%
\begin{equation}  \label{e:Eenupperbd}
  \Ee(u_\eps(t)) \leqslant \eta \logep + o_\eps(\logep).
\end{equation}
\end{proposition}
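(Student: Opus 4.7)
\textbf{Proof proposal for Proposition~\ref{p:split}.}
The strategy is an energy splitting in the spirit of Serfaty's decomposition~\eqref{e:decompenSer}, localized to times for which the Jacobian has not had time to concentrate far from its initial profile. Writing the magnetic potential as $A_\eps = \hex \nabla^\perp \xi_m + A_\eps'$, a direct expansion of $G_\eps(u_\eps,A_\eps)$ using~\eqref{e:Meissner} produces the identity
\begin{equation*}
  G_\eps(u_\eps,A_\eps) = G_m(\hex) + \GF(u_\eps, A_\eps - \hex \nabla^\perp \xi_m) + 2\pi \hex \int_\Omega \xi_m \, J(u_\eps) \, dx + R_\eps(u_\eps, A_\eps),
\end{equation*}
where $R_\eps$ is a lower-order cross term that can be controlled by $o_\eps(\logep)$ provided the Jacobian is small in duality with $\xi_m$ (a computation essentially carried out in~\cite{SerfatyLocal}). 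Combined with $\Ee(u_\eps) \leqslant \GF(u_\eps, A_\eps - \hex \nabla^\perp \xi_m)$ and the energy dissipation identity~\eqref{e:enlaw}, this gives the bound
\begin{equation*}
  \Ee(u_\eps(t)) \leqslant (G_\eps(u_\eps^0, A_\eps^0) - G_m) - 2\pi \hex \int_\Omega \xi_m \, J(u_\eps(t)) \, dx + o_\eps(\logep).
\end{equation*}
So the proposition reduces to showing that the middle term is $o_\eps(\logep)$ for $t$ in the stated range.

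Since $\xi_m$ vanishes on $\partial \Omega$ and is smooth, $\norm{\xi_m}_{C^{0,\gamma}_0} \leqslant C$, so by duality
\begin{equation*}
  \hex \abs[\Big]{ \int_\Omega \xi_m \, J(u_\eps(t)) \, dx } \leqslant C \hex \, \norm{J(u_\eps(t))}_{(C^{0,\gamma}_0)^*}.
\end{equation*}
At $t=0$ this is $o_\eps(\logep)$ by hypothesis~\eqref{e:JacCond}. The main task is therefore to propagate this bound: show that $\norm{J(u_\eps(t)) - J(u_\eps^0)}_{(C^{0,\gamma}_0)^*}$ remains $o_\eps(\logep/\hex)$ on the time scale $t \ll (\logep/\hex^3)^{2/\gamma}$.

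For this, I would test $J(u_\eps(t)) - J(u_\eps^0)$ against $\varphi \in C^{0,\gamma}_0$. Using the pointwise identity $\partial_s J(u_\eps) = \dv((i \partial_{\Phi_\eps} u_\eps, \nabla^\perp u_\eps))$ (derived from~\eqref{e:GE1} and gauge invariance, moving the $\Phi_\eps$ piece into the divergence), integration by parts yields
\begin{equation*}
  \abs[\Big]{ \int_\Omega \varphi (J(u_\eps(t)) - J(u_\eps^0)) \, dx }
  \leqslant \int_0^t \int_\Omega \abs{\nabla \varphi} \, \abs{\partial_{\Phi_\eps} u_\eps} \, \abs{\nabla u_\eps} \, dx \, ds,
\end{equation*}
and I would interpolate this between the Lipschitz bound (giving $O(t^{1/2})$ via Cauchy--Schwarz, the energy identity~\eqref{e:enlaw}, and the $L^\infty$ bound $\norm{\nabla u_\eps}_{L^\infty} \leqslant C/\eps$ from Lemma~\ref{l:regularity}) and the trivial $L^\infty$ bound on $J$ (giving $O(\eps^{-2})$). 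A standard mollification/cutoff argument then trades the $C^{0,1}$ bound for a $C^{0,\gamma}$ bound and produces a control of the form $\norm{J(u_\eps(t)) - J(u_\eps^0)}_{(C^{0,\gamma}_0)^*} \lesssim t^{\gamma/2} \hex$ (up to logarithmic factors), where the $\hex$ factor enters through the $L^2$-in-time bound on $\partial_{\Phi_\eps} u_\eps$ coming from $G_\eps(u_\eps^0, A_\eps^0) = O(\hex^2)$. Multiplying by $\hex$ and demanding $t^{\gamma/2}\hex^2 = o(\logep)$ recovers exactly the quantitative window $t \ll (\logep/\hex^3)^{2/\gamma}$ claimed in the proposition.

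The main obstacle, and the step I would be most careful about, is the rigorous implementation of the time-derivative bound for $J(u_\eps)$. The naive estimate $\norm{\partial_s J(u_\eps)}_{L^1} \lesssim \norm{\partial_{\Phi_\eps} u_\eps}_{L^2} \norm{\nabla u_\eps}_{L^2}$ loses a factor of $\logep^{1/2}$ and would not close the argument; one has to use the divergence structure together with the full energy identity~\eqref{e:enlaw} (whose dissipative term contributes the powerful bound $\int_0^t \norm{\partial_{\Phi_\eps} u_\eps}_{L^2}^2 \, ds \leqslant G_\eps(u_\eps^0,A_\eps^0)$) and the gauge condition (Coulomb) to handle the $\Phi_\eps u_\eps$ piece of $\partial_t u_\eps$. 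Matching the interpolation exponents between the Lipschitz and $L^\infty$ bounds on $J$ to produce exactly the $C^{0,\gamma}_0$ dual norm is also delicate; once this dissipation-based control of $\norm{J(u_\eps(t))-J(u_\eps^0)}_{(C^{0,\gamma}_0)^*}$ is in place, combining it with the energy splitting above yields~\eqref{e:Eenupperbd}.
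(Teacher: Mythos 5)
Your overall strategy is the same as the paper's (split off the Meissner energy, reduce everything to propagating the smallness of $\norm{J(u_\eps(t))}_{(C^{0,\gamma}_0)^*}$ on the stated time window), but there is a genuine gap at the step where you write $\Ee(u_\eps) \leqslant \GF(u_\eps, A_\eps - \hex\nabla^\perp\xi_m)$. This inequality is false in general: with $A'_\eps = A_\eps - \hex\nabla^\perp\xi_m$ one has $\abs{\nablaAep u_\eps}^2 = \abs{\nabla u_\eps}^2 - 2A'_\eps\cdot j(u_\eps) + \abs{A'_\eps}^2\abs{u_\eps}^2$, and the cross term has no sign; since $\norm{A'_\eps}_{H^1}$ is only $O(\hex)$, it could a priori swamp the $\eta\logep$ budget. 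The paper needs an extra step precisely here: in the Coulomb gauge $A'_\eps = \nabla^\perp\xi'_\eps$ with $\norm{\xi'_\eps}_{C^{0,\gamma}} \leqslant C\hex$, the cross term integrates by parts to $2\int_\Omega \xi'_\eps\, J(u_\eps)\,dx$, and this is shown to be $o_\eps(\logep)$ using the \emph{same} propagated Jacobian bound $\norm{J(u_\eps(t))}_{(C^{0,\gamma}_0)^*} = o_\eps(\logep/\hex)$ — i.e.\ the duality estimate you apply only to $\hex\xi_m$ must also be applied to $\xi'_\eps$. So your reduction to the single term $\hex\int\xi_m J(u_\eps(t))$ omits a term of the same size; the fix uses tools you already have, but as written the argument does not close.

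Two secondary points in your Jacobian propagation also need repair. First, the identity $\partial_s J(u_\eps) = \dv(i\partial_{\Phi_\eps}u_\eps, \nabla^\perp u_\eps)$ is not exact: commuting $\partial_t$ with the gauge term leaves an extra contribution of the form $\tfrac12\curl\paren[\big]{(1-\abs{u_\eps}^2)\nabla\Phi_\eps}$. The paper sidesteps this by evolving the \emph{gauged} Jacobian $J_{A_\eps}(u_\eps)=\tfrac12\curl(j_{A_\eps}(u_\eps)+A_\eps)$, which obeys the clean identity \eqref{e:gaugejacdt}, and then comparing $J_{A_\eps}(u_\eps)$ with $J(u_\eps)$ at cost $C\eps\hex^2$ in $\dot W^{-1,1}$; your missing term is harmless (it is estimated exactly like the $E_\eps(1-\abs{u_\eps}^2)$ term, contributing $C\eps\sqrt{t}\,\hex^2$), but it must be accounted for. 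Second, the interpolation endpoint should not be the pointwise bound $\norm{J}_{L^\infty}\lesssim\eps^{-2}$: the dual-H\"older interpolation (Jerrard--Soner) pairs the Lipschitz-dual estimate $C\hex^2\max\set{\eps,\sqrt t}$ with the $(C^0_0)^*$ (mass) bound $C\hex^2$, coming from $\norm{J(u_\eps)}_{L^1}\leqslant\tfrac12\norm{\nabla u_\eps}_{L^2}^2$. This gives $\norm{J(u_\eps(t))-J(u^0_\eps)}_{(C^{0,\gamma}_0)^*}\leqslant C\hex^2\paren[\big]{\max\set{\eps,\sqrt t}}^\gamma$ — two powers of $\hex$, not one — and the window then comes from demanding $\hex\cdot\hex^2 t^{\gamma/2}\ll\logep$, i.e.\ $t\ll(\logep/\hex^3)^{2/\gamma}$; your chain ``bound $t^{\gamma/2}\hex$, then $t^{\gamma/2}\hex^2=o(\logep)$ gives the window'' is internally inconsistent by a power of $\hex$.
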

\begin{remark*}
By the assumptions on $\hex$, we have
\begin{equation*}
  \eps^\beta \ll \paren[\Big]{ \frac{\logep }{ \hex^3} }^{{2 / \gamma }}
\end{equation*}
for any $0 \leqslant \beta \leqslant 1$ and all $\eps \leqslant \eps_0$, independent of $\beta, \gamma$.
\end{remark*}
\begin{remark*}
A dipole separated by $\eps^\alpha$ or a vortex at a distance $\eps^\alpha$ from the boundary satisfy the hypotheses.
\end{remark*}

\begin{proof}[Proof of Proposition~\ref{p:split}]
  \begin{inparaenum}[1.]

  \item
    We first establish some regularity results on solutions of the equation. We will fix a Coulomb Gauge that ensures
\[
 \dv A_\eps = 0 \hbox{ in } \Omega \qquad A_\eps \cdot \nu = 0 \hbox{ on } \partial \Omega \,.
\]
In this gauge, a solution satisfies
\begin{align*}
\partial_t u_\eps - i \Phi_\eps u_\eps & = \nablaAe^2 u_\eps + \frac{1}{\eps^2} u_\eps \paren{ 1 - |u_\eps|^2 } \,,\\
\partial_t A_\eps - \nabla \Phi_\eps & = \Delta A_\eps + j_{A_\eps}(u_\eps) \,,
\end{align*}
in $\Omega$ with boundary conditions
\begin{align*}
\partial_\nu u_\eps & = \nu \cdot A_\eps = \partial_\nu \Phi_\eps = 0\,, \\
{h_\eps }  & = \hex \,,
\end{align*}
on $\partial \Omega$.
Using the boundary conditions \eqref{e:GE3}--\eqref{e:GE4} and \eqref{e:energyid1} we have the energy bound
\begin{equation}
\begin{split} 
 G_\eps(u_\eps(t), A_\eps(t))  + \int_0^t \int_\Omega \abs{ \partial_{\Phi_\eps} u_\eps }^2 + \abs{E_\eps}^2  \, dx \, ds   = G_\eps(u_\eps^0, A_\eps^0)  ,
\end{split}
\end{equation}
and by assumptions on $\hex$,  $G_\eps(t) \leqslant C \hex^2$.  
(We assume here, and subsequently, that $C$ is a constant independent of $\eps$ that may increase from line to line.)
Therefore,
\begin{equation*}
  \norm{ \nablaAe u_\eps }_{L^2} \leqslant C \hex,
\quad
\norm{ E_\eps }_{L^2([0,t];L^2(\Omega))} \leqslant C \hex,
\quad\text{and}\quad
\norm{ A_\eps }_{H^1} \leqslant C \hex,
\end{equation*}
where the last bound on $A_\eps$ follows from the Coulomb gauge and a standard Hodge argument.

\item
  Next we claim that for all $0 \leqslant t \leqslant 1$ and any $0 \leqslant \gamma \leqslant 1$,  
\begin{equation} \label{e:controljactime}
  \norm[\big]{ J(u_\eps(t)) - J(u^0_\eps) }_{(C_0^{0,\gamma})^*} \leqslant {C \hex^2 \paren[\big]{ \max \{ \eps,  t^{\sfrac{1 }{ 2}} \} }^\gamma },
\end{equation}
and if $0 \leqslant t \ll \paren[\big]{ \frac{ \logep }{ \hex^3} }^{{2 / \gamma }}$, then
\begin{equation} \label{e:controljactime2}
  \norm*{ J(u_\eps(t)) - J(u^0_\eps) }_{(C_0^{0,\gamma})^*} = o_\eps\paren[\Big]{ \frac{\logep}{ \hex} }.
\end{equation}
This will enable us to split the full Ginzburg-Landau energy sufficiently well.

We first establish an estimate on the continuity of the Jacobian in certain weak topologies.
Recalling
$J_A(u) = \frac{1}{2} \curl \paren{ j_A(u) + A }$,
a direct calculation shows
\begin{equation} \label{e:gaugejacdt}
\partial_t J_{A_\eps}(u_\eps)   =  \curl \paren{ i \partial_{\Phi_\eps} u_\eps , \nablaAe u_\eps } + \curl \paren[\Big]{ E_\eps \paren[\Big]{ \frac{ 1 - |u_\eps|^2  }{ 2} } } \; .
\end{equation}

Now for any $\varphi \in C^{0,\gamma}_0$ we have
\begin{align*}
  \MoveEqLeft
  \abs[\Big]{ \int_\Omega \paren[\big]{ J_{A_\eps(t)}(u_\eps(t)) - J_{A^0_\eps}(u_\eps^0) } \varphi \, dx } 
   = \abs[\Big]{  \int_0^t \int_\Omega  \varphi(x) \frac{d }{ ds}J_{A_\eps(s)}(u_\eps(s))  \, dx \, ds } \\
&= 2 \abs[\Big]{ \int_0^t \int_\Omega \nabla^\perp \varphi \cdot \paren[\big]{ i \partial_{\Phi_\eps} u_\eps , \nablaAe u_\eps } \, dx \, ds } \\
& \qquad + \abs[\Big]{ \int_0^t \int_\Omega \nabla^\perp \varphi \cdot E_\eps \paren{ 1 - |u_\eps|^2 } \, dx \, ds } \\
&\leqslant 2 \norm{ \nabla \varphi }_{L^\infty} \norm{ \partial_{\Phi_\eps} u_\eps }_{L^2([0,t] \times \Omega)} \norm{ \nablaAe u_\eps }_{L^2([0,t] \times \Omega)}  \\
&\qquad +  \eps  \norm{ \nabla \varphi }_{L^\infty} \norm{ E_\eps }_{L^2([0,t] \times \Omega)} \norm[\Big]{ \frac{ 1 - |u_\eps|^2  }{ \eps} }_{L^2([0,t] \times \Omega)}  \\
& \leqslant 2\sqrt{t}  \norm{ \nabla \varphi }_{L^\infty}  \norm{ \partial_{\Phi_\eps} u_\eps }_{L^2([0,t] \times \Omega)}    \norm{ \nablaAe u_\eps }_{L^\infty([0,t]; L^2 (\Omega))} \\
& \qquad + \eps \sqrt{t}  \norm{ \nabla \varphi }_{L^\infty}  \norm{ E_\eps }_{L^2([0,t] \times \Omega)}    \norm[\Big]{ \frac{ 1 - |u_\eps|^2  }{ \eps}  }_{L^\infty([0,t]; L^2 (\Omega))}\; ,
\end{align*}
and so
\begin{equation*}
  \abs[\Big]{ \int_\Omega \paren[\big]{ J_{A_\eps(t)} (u_\eps(t)) - J_{A_\eps^0}(u_\eps^0) } \varphi \, dx } \leqslant C \norm{ \nabla \varphi }_{L^\infty} \hex^2 \sqrt{t} \,.
\end{equation*}
This implies
\begin{equation} \label{e:W11est}
  \norm*{ J_{A_\eps(t)}(u_\eps(t)) - J_{A_\eps^0}(u^0_\eps) }_{\dot{W}^{-1,1}(\Omega)} \leqslant C  \hex^{2}\sqrt{t} \,.
\end{equation}
However, we note that for any $\varphi \in W_0^{1,\infty}$, 
\begin{align*}
  \abs[\Big]{ \int_\Omega \varphi \paren[\big]{ J_{A_\eps} (u_\eps) - J(u_\eps) } \, dx  } & = \abs[\Big]{ \int_\Omega \nabla^\perp \varphi \cdot \paren[\big]{ j_{A_\eps}(u_\eps) + A_\eps - j(u_\eps) } } \\
& = \abs[\Big]{ \int_\Omega \nabla^\perp \varphi \cdot \paren[\big]{ A_\eps (1 - |u_\eps|^2 ) } } \\
& \leqslant \eps \norm{ \nabla \varphi  }_{L^\infty} \norm{ A_\eps }_{L^2} \norm[\Big]{ \frac{ 1 -|u_\eps|^2 }{ \eps} }_{L^2} \\
& \leqslant C \eps  \norm{ \nabla \varphi  }_{L^\infty} \hex^2 \,,
\end{align*}
where we recall $j(u) \defeq (iu, \nabla u)$.
Consequently
\begin{equation} \label{e:gaugeJacvsjac}
  \norm*{ J_{A_\eps(t)}(u_\eps(t)) - J(u_\eps(t)) }_{\dot{W}^{-1,1}(\Omega)} \leqslant C \eps \hex^{2} \,,
\end{equation} 
for all $t \leqslant \eps^{- \frac{1}{2}}$.
In particular \eqref{e:W11est} and \eqref{e:gaugeJacvsjac} imply
\begin{equation} \label{e:C01est}
  \norm*{ J(u_\eps(t)) - J(u^0_\eps) }_{(C^{0,1}(\Omega))^*} \leqslant \norm*{ J(u_\eps(t)) - J(u^0_\eps) }_{\dot{W}^{-1,1}(\Omega)} \leqslant C \max\{ \eps, \sqrt{t}\} \hex^{2}.
\end{equation}

A similar calculation shows that
\begin{equation} \label{e:C00est}
  \norm*{ J(u_\eps(t)) - J(u^0_\eps) }_{(C^{0}_0(\Omega))^*} \leqslant  C \hex^{2}.
\end{equation}
Using \eqref{e:C01est} and \eqref{e:C00est}, along with interpolation of dual H\"older spaces, (see Jerrard-Soner \cite{JerrardSoner}), 
yields \eqref{e:controljactime}.

\item
  Next, we claim that
\begin{equation} \label{e:energydecomp}
G_\eps(u_\eps(t), A_\eps(t)) = G_{m} + \GF (u_\eps(t), A_\eps'(t)) + o_\eps(\logep) \,,
\end{equation}
where $A_\eps' (t) = A_\eps(t) - \hex \nabla^\perp \xi_m$ and $\GF$ is defined in \eqref{e:freeGLen}.
To see this, we decompose 
\begin{align*}
G_\eps(u_\eps, A_\eps) & = \GF(u_\eps, A'_\eps) + \hex^2 \int_\Omega \abs{ \nabla \xi_m }^2 |u_\eps|^2 + |h_m - 1 |^2 \, dx  \\
& \quad - \hex \int_\Omega \nabla^\perp \xi_m \cdot j_{A'_\eps} (u_\eps) + h'_\eps (h_m - 1) \, dx \,,
\end{align*}
where $h_\eps' = \curl A_\eps'$.
Note $h_m - 1 = \xi_m$, where $h_m \defeq \Delta \xi_m$ is the Meissner magnetic field.
Therefore, 
\begin{align*}
  \MoveEqLeft[5]
  - \hex \int_\Omega \nabla^\perp \xi_m \cdot j_{A'_\eps} (u_\eps) + h'_\eps (h_m - 1) \, dx\\
 &= - \hex \int_\Omega \nabla^\perp \xi_m \cdot j (u_\eps) - \nabla^\perp \xi_m \cdot A'_\eps |u_\eps|^2 + h'_\eps \xi_m \, dx \\
&=  2 \hex \int_\Omega  \xi_m J (u_\eps) \, dx +\hex  \int_\Omega \nabla^\perp \xi_m \cdot A'_\eps (|u_\eps|^2 - 1) \, dx  \,,
\end{align*}
and so 
\begin{align*}
G_\eps(u_\eps, A_\eps) & = G_m + \GF(u_\eps, A'_\eps) +  2 \hex \int_\Omega  \xi_m J (u_\eps) \, dx  \\
& \quad  +\hex  \int_\Omega \nabla^\perp \xi_m \cdot A'_\eps (|u_\eps|^2 - 1) \, dx + \hex^2 \int_\Omega \abs{ \nabla \xi_m }^2 ( |u_\eps|^2 - 1) \, dx \\
& = G_m + \GF(u_\eps, A'_\eps) + 2 \hex \int_\Omega  \xi_m J (u_\eps) \, dx  + o_\eps(\logep) \,.
\end{align*}
Since $\xi_m$ is a smooth function (and hence in any H\"older space), then 
\begin{equation*}
\hex \int_\Omega \xi_m J(u^0_\eps) \, dx = o_\eps(\logep)
\end{equation*}
holds by assumption and \eqref{e:energydecomp} follows from \eqref{e:controljactime}. 

\item
  Finally, we prove that 
\begin{equation} \label{e:bbhenergycon}
 \Ee(u_\eps(t)) \leqslant \eta \logep + o_\eps(\logep)
\end{equation}
 for all $t \leqslant \paren[\big]{ \frac{\logep}{\hex^3}}^{2 /  \gamma}$.

By  \eqref{e:enlaw} and our assumptions
\begin{equation} \label{e:enlaw2}
\begin{split} 
& G_\eps(u_\eps(t), A_\eps(t)) + \int_0^t \int_\Omega \abs{ \partial_{\Phi_\eps} u_\eps }^2 + |\nabla \Phi_\eps |^2 + |\partial_t A_\eps|^2 \, dx \, ds   \leqslant G_m + \eta \logep.
\end{split}
\end{equation}
By steps 2 and 3, we have  
\begin{equation} \label{e:energydecomp1}
  \GF(u_\eps(t), A'_\eps(t))  \leqslant  \eta \logep +  o_\eps(\logep)
\end{equation}
as long as $t \ll \paren[\big]{\frac{ \logep }{ \hex^3}}^{{2 /  \gamma}}$.
Thus, we can continue our decomposition and use 
\begin{align*}
\int_\Omega \abs{ \nablaAep u_\eps }^2 \, dx & = \int_\Omega \abs{ \nabla u_\eps}^2 - 2 \nabla^\perp \xi'_\eps  \cdot j(u_\eps) + |A'_\eps|^2 |u_\eps|^2 \, dx \\
& = \int_\Omega \abs{ \nabla u_\eps}^2 - 2  \xi'_\eps  J(u_\eps) + |A'_\eps|^2 |u_\eps|^2 \, dx \,.
\end{align*}
By \eqref{e:energydecomp1} we have  $\norm{ A'_\eps }_{H^1} \leqslant C {\hex}$ then  $\| \xi'_\eps(t) \|_{C^{0,\gamma}} \leqslant C \hex$
for all $t \geqslant 0$. 
Using $\norm{ J(u^0_\eps) }_{(C^{0,\gamma}_{0})^*}= o_\eps(\frac{\logep}{ \hex})$ and \eqref{e:controljactime}, along with the bound 
on $\xi_\eps'$ yields \eqref{e:bbhenergycon}. 
%
%
%
\end{inparaenum}
\end{proof}

We now state the  $\eta$-compactness result for the gauge-less energy, $\mathcal{E}_\eps$.

\begin{proposition}[Proposition 2.2 in~\cite{SerfatyCollision1}]
\label{p:serfaty}
Suppose $u_\eps$ satisfies the \emph{static} equation
\begin{align*}
\Delta u_\eps + \frac{1}{\eps^2} u_\eps \paren{ 1 - |u_\eps|^2 } & = f_\eps \text{ in } \Omega \,,\\
\partial_\nu u_\eps & = 0  \text{ on } \partial \Omega \,.
\end{align*}
Further, assume $|u_\eps| \leqslant 1$, $\Ee(u_\eps) \leqslant M \logep$, $| \nabla u_\eps| \leqslant \frac{C }{ \eps}$, and $ \norm{ f_\eps }_{L^2} \leqslant \frac{1 }{ \eps^{\beta}}$ for some $\beta < 2$.
Then, after extraction of a subsequence $\eps \to 0$, we can find $R_\eps \to +\infty$ with $R_\eps \leqslant C \logep$ and a family of balls $\cup_{i=1}^n B_i = \cup _{i=1}^n B(a_i, R_\eps \eps)$,
with $a_i$ depending on $\eps$ and $n$ bounded independently of $\eps$, 
such that the following hold.
\begin{enumerate}
\item  As $\eps \to 0$ we have
\begin{equation} \label{e:Linftyoutside}
  \norm{ 1 - |u_\eps| }_{L^\infty (\Omega \backslash \cup_i B(a_i, R_\eps \eps))} \to 0 \,.
\end{equation}
%
%

\item For every $\beta < 1$ and every subset $I$ of $[1,n]$, we have 
\begin{equation} \label{e:degboundSerf}
\beta \pi \sum_{i \in I} d_i^2 \leqslant {\int_{\cup_{i \in I} B(a_i, R_\eps \eps^{1-\beta})} \frac{e_\eps(u_\eps) }{ \logep} dx} + 
C \logep^{\frac{7}{ 2}} \eps^{1 - \beta} \norm{ f_\eps }_{L^2} + o_\eps(1)
\end{equation}

\end{enumerate}
\end{proposition}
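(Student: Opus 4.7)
The plan is to combine a vortex ball construction with a clearing-out (or $\eta$-ellipticity) lemma, both adapted to the presence of the forcing term $f_\eps$. First, I would locate the ``bad set'' $S_\eps \defeq \set{ x \in \Omega \st |u_\eps(x)| \leqslant 1/2}$. Using the gradient bound $|\nabla u_\eps| \leqslant C/\eps$, a Vitali-type covering shows $S_\eps$ is contained in finitely many balls of radius $\eps$. The energy bound $\Ee(u_\eps) \leqslant M \logep$ combined with the potential term $\frac{1}{4\eps^2}(1 - |u_\eps|^2)^2$ controls the number of such initial balls by a constant depending only on $M$.

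Next, I would apply the Sandier--Serfaty ball construction to grow these seed balls into a disjoint family $\set{B(a_i, R_\eps \eps)}_{i=1}^n$ with $R_\eps \to \infty$, $R_\eps \leqslant C\logep$, and $n = O(M)$. Once $|u_\eps|$ is bounded away from zero on each $\partial B(a_i, R_\eps \eps)$, the degrees $d_i \in \Z$ are well defined. Conclusion~\eqref{e:Linftyoutside} then follows from an $\eta$-ellipticity lemma for the \emph{forced} equation: on any ball where the scaled GL energy density is sufficiently small, $|u_\eps|$ is forced close to~$1$ on a concentric smaller ball. Since the seed balls exhausted all regions where $|u_\eps| \leqslant 1/2$, the criterion applies outside $\cup_i B(a_i, R_\eps \eps)$, yielding part (1).

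For part (2), on each ball $B_i = B(a_i, R_\eps \eps^{1-\beta})$ I would combine the classical Jerrard--Sandier lower bound with a Pohozaev-type correction. In the unforced case, a decomposition into annuli and a direct computation against the circulation of $j(u_\eps)$ gives
\begin{equation*}
  \int_{B_i} e_\eps(u_\eps) \, dx \geqslant \pi d_i^2 (1-\beta) \logep + o(\logep).
\end{equation*}
To absorb~$f_\eps$, I would test the static equation against $(x - a_i) \cdot \nabla u_\eps$ cut off by a radial bump supported in $B_i$. Integration by parts produces the clean unforced lower bound plus an error term bounded by $\|f_\eps\|_{L^2} \cdot \|(x-a_i)\cdot \nabla u_\eps \, \chi\|_{L^2}$, and Cauchy--Schwarz, the gradient bound $|\nabla u_\eps| \leqslant C/\eps$, and the radius $R_\eps\eps^{1-\beta}$ yield the stated error $C \logep^{7/2} \eps^{1-\beta}\|f_\eps\|_{L^2}$, with the logarithmic power reflecting the accumulation of cutoff-norm losses over the annular decomposition.

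The main obstacle is the forcing term. In the classical ($f_\eps = 0$) theory, the Pohozaev identity is a clean equality and the clearing-out lemma follows from elliptic regularity applied to the critical-point equation. With $f_\eps \ne 0$, every step --- the clearing-out lemma, the Jacobian concentration, and the lower bound per ball --- acquires an error proportional to $\|f_\eps\|_{L^2}$ multiplied by a geometric factor. The technical heart of the argument is verifying that these errors remain $o(\logep)$ under the hypothesis $\|f_\eps\|_{L^2} \leqslant \eps^{-\beta}$ with $\beta < 2$; the $\logep^{7/2}\eps^{1-\beta}\|f_\eps\|_{L^2}$ term in the conclusion is precisely the non-absorbable remainder that survives after optimizing the scale $R_\eps$.
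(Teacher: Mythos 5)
There is no in-paper proof to compare against: the paper quotes this result verbatim as Proposition~2.2 of \cite{SerfatyCollision1} and uses it as a black box. Your outline does follow the same general architecture as that source (a vortex-ball construction seeded on the set where $\abs{u_\eps}$ is small, a clearing-out/$\eta$-ellipticity lemma adapted to the forced equation for part~(1), and per-ball lower bounds with Pohozaev-type control of the forcing for part~(2)), so the strategy is the right one.

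As a proof, however, it has genuine gaps. First, your covering argument does not give a bounded number of balls: each ball of radius $c\eps$ on which $\abs{u_\eps}\leqslant 3/4$ contributes only $O(1)$ to the energy, so the Vitali covering of $\set{\abs{u_\eps}\leqslant 1/2}$ produces $O(M\logep)$ seed balls, and the ball-growth procedure by itself does not reduce this to $n=O(M)$. Boundedness of $n$ (after extraction) has to come from the lower bound itself --- balls carrying nonzero degree cost energy of order $\logep$ --- together with a separate argument for zero-degree balls on which $\abs{u_\eps}$ is small, which must nevertheless be retained for \eqref{e:Linftyoutside} to hold; this step is missing. Second, the clearing-out statement with only $\norm{f_\eps}_{L^2}\leqslant \eps^{-\beta}$, $\beta<2$, is not the classical one: pointwise closeness of $\abs{u_\eps}$ to $1$ outside the balls requires upgrading the $L^2$ control of $f_\eps$ at scale $\eps$ via elliptic estimates (this is where $\abs{u_\eps}\leqslant 1$ and $\abs{\nabla u_\eps}\leqslant C/\eps$ enter), and you invoke it rather than prove it. Third, the error term in \eqref{e:degboundSerf} is asserted, not derived: your Cauchy--Schwarz mechanism with $\abs{x-a_i}\leqslant R_\eps\eps^{1-\beta}$ and $\int\abs{\nabla u_\eps}^2\leqslant 2M\logep$ would give an error of order $\eps^{1-\beta}\logep^{3/2}\norm{f_\eps}_{L^2}$, and the appeal to ``accumulation of cutoff-norm losses'' to reach $\logep^{7/2}$ concedes that the actual annulus-by-annulus bookkeeping --- which is the technical heart of the cited result --- has not been carried out, nor has it been checked that all error terms produced by the forced Pohozaev identity and the degree lower bounds are of the stated form.
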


We now prove Proposition~\ref{p:etacpt}.

\begin{proof}[Proof of Proposition~\ref{p:etacpt}]
Using the above bounds on the gauged problem above, we follow the template laid out in~\cite{SerfatyCollision2}.

%

\begin{asparaenum}[1.]
\item   We first claim that for any $0 < \delta < 1$, there exists a time $t_\eps \in (\frac{\eps^\delta }{ 2}, \eps^\delta)$ such that 
\begin{equation} \label{e:ptkineticbd}
  \norm[\big]{ \partial_{\Phi_\eps}u_\eps (t_\eps)  }_{L^2}^2 + \norm[\big]{ E_\eps (t_\eps)  }_{L^2}^2  \leqslant C {\eps^{-\delta}} \hex^2.
\end{equation}
(We will later choose $\delta$ to ensure there are no vortices at time $t_\eps$.) 
To prove~\eqref{e:ptkineticbd}, we use the parabolic bound
\begin{align*}
\int_0^t \paren[\Big]{ \norm[\big]{ \partial_{\Phi_\eps} u_\eps (s) }^2_{L^2} + \norm[\big]{ E_\eps (s) }_{L^2}^2 } \, ds
& \leqslant  G_\eps(u_\eps^0, A_\eps^0)   \leqslant  C \hex^2 \,.
\end{align*}
Since we also know
\[
C \hex^2 \geqslant
\int_{\frac{t }{ 2}}^t \norm[\big]{ \partial_{\Phi_\eps} u_\eps (s) }^2_{L^2}  + \norm[\big]{ E_\eps (s)  }_{L^2}^2 \, ds \geqslant \frac{t }{ 2} \inf_{s \in [\frac{t}{ 2}, t]} \paren[\Big]{ \norm[\big]{ \partial_{\Phi_\eps} u_\eps (s) }^2_{L^2} + \norm[\big]{ E_\eps (s)  }_{L^2}^2 } 
\]
then there exists a $t_\eps \in  ( \frac{ \eps^\delta }{ 2} ,\eps^\delta )$ such that  \eqref{e:ptkineticbd} holds. 

\item We claim that this implies that 
\begin{equation}  \label{e:LinfbdA}
  \norm{ A_\eps(t_\eps) }_{L^\infty} \leqslant C {\eps^{-\delta/2}} \hex.
\end{equation}
To see this we note that from \eqref{e:ptkineticbd}, 
\begin{align*}
  \norm{ \nabla h_\eps(t_\eps) }_{L^2} 
& \leqslant \norm{ j_{A_\eps} (u_\eps(t_\eps))  }_{L^2} + \norm{ E_\eps(t_\eps) }_{L^2} \\
& \leqslant C \hex  + C  \eps^{-{\delta / 2}}\hex.
\end{align*}
In particular, $\norm{ h_\eps(t_\eps) }_{H^1} \leqslant C {\eps^{-\delta / 2}} \hex $. Using standard elliptic theory and a Hodge decomposition of $A_\eps$ we find that 
\begin{align}
\nonumber \norm{ A_\eps(t_\eps) }_{L^\infty}  \leqslant C \norm{ A_\eps(t_\eps) }_{H^2} \leqslant C \norm{ \curl (-\Delta_0^{-1}) h _\eps(t_\eps) }_{H^2}  
\leqslant C \norm{ h_\eps(t_\eps) }_{H^1} ,
\end{align}
and \eqref{e:LinfbdA} follows by embedding.

\item
  We now show that the first two steps and the hypotheses allow us to use Proposition~\ref{p:split}.  
From the evolution equation for $u_\eps$ we can write 
\begin{align*}
\Delta u_\eps + \frac{1}{\eps^2} u_\eps \paren{ 1 - |u_\eps|^2 } = f_\eps ,
\end{align*}
where 
\begin{equation} \label{e:fdef}
f_\eps \equiv  2 i A_\eps \cdot \nablaAe u_\eps + |A_\eps|^2 u_\eps - \partial_{\Phi_\eps} u_\eps .
\end{equation}
Each of these terms can be estimated in $L^2$ for some time $t \in (\frac{1}{2}  \eps^\delta,  \eps^{\delta})$.  From 
\eqref{e:LinfbdA} 
and \eqref{e:enlaw}
\begin{equation} \label{e:kinest1}
  \norm{ A_\eps \cdot \nablaAe u_\eps }_{L^2} \leqslant \norm{ A_\eps }_{L^\infty} \norm{ \nablaAe u_\eps }_{L^2} \leqslant C \eps^{-\frac{\delta }{ 2}} \hex
\end{equation}
and
\begin{equation} \label{e:kinest2}
  \norm{ |A_\eps|^2 u_\eps }_{L^2} \leqslant \norm{ A_\eps }^2_{H^1} \leqslant C \hex^2.
\end{equation}
\begin{equation} \label{e:kinest3}
  \norm{ \partial_{\Phi_\eps} u_\eps(t_\eps) }_{L^2} \leqslant C \eps^{-\frac{\delta}{2 }} \hex.
\end{equation}

Then by \eqref{e:kinest1}-\eqref{e:kinest3} for the  $t_\eps \in  (\frac{ \eps^\delta }{ 2} ,\eps^\delta )$ we have 
\begin{equation}  \label{e:fupper}
  \norm{ f_\eps (t_\eps) }_{L^2} \leqslant C \eps^{ - \frac{\delta }{ 2 }}\hex.
\end{equation}

\item
  We can now follow Proposition~2.1 in \cite{SerfatyCollision2}; by the structure result Proposition~\ref{p:serfaty} at 
$t_\eps$ defined above,  there are vortices $\{a_j\}$ of degree $\{d_j\}$ such that for any $\beta < 1$, 
\begin{equation} \label{e:degbd}
\beta \pi \sum_{j} d_j^2 \leqslant \eta + C \logep^{\frac{7}{2}} \eps^{1 - \beta - \frac{\delta }{ 2}} \hex + o_\eps(1).
\end{equation}
We can choose $\beta > \frac{\eta }{ 2 \pi}$ and $\delta < 2 - 2 \beta$, then by \eqref{e:degboundSerf} we have that for all $\eps \leqslant \eps_0$,  $\sum_j d^2_j <2$; consequently,
 $\sum d^2_j \in \{0,1\}$.  However, since the $d_j$'s are nontrivial then either $\norm{ 1 - u_\eps(t_\eps) }_{L^\infty} = o_\eps(1)$ or there exists one vortex $a$ with degree $\pm 1$.   
 
 Suppose now that there exists a single vortex $a$, we again use the argument from \cite{SerfatyCollision2} to get  
 that $E_\eps(u_\eps(t_\eps)) \geqslant \pi \ln \frac{ \ell }{ \eps} + O(1)$, where $\ell = \dist(a, \partial \Omega)$.  
 But upper bound \eqref{e:Eenupperbd} implies that $E_\eps(u_\eps(t_\eps)) \leqslant \eta \logep + o_\eps(\logep)$ which implies that the vortex must satisfy $\ell \leqslant \eps^\mu$ 
for some 
\begin{equation} \label{e:mubound}
\mu \geqslant 1 - \frac{\eta }{ \pi}. 
\end{equation}
By Theorem 2 of \cite{SerfatyCollision1} we have that  
\begin{equation} \label{e:flowerbound}
  \norm{ f_\eps (t_\eps) }_{L^2} \geqslant C \frac{ \eps^{-\mu} }{ \logep },
\end{equation}
for some constant $C$ depending on $\beta$ and $\Omega$.  
 Given \eqref{e:fupper} and \eqref{e:flowerbound} we see that
 \begin{equation}  \label{e:deltamulower}
 2 \mu < \delta. 
 \end{equation}

On the other hand we  can further restrict ${\delta } < 2 -  2\frac{\eta }{ \pi}$, and so with \eqref{e:mubound} we get 
\begin{equation} \label{e:deltamuupper}
{\delta } <  2 \mu,
\end{equation}
and a contradiction between \eqref{e:deltamulower} and \eqref{e:deltamuupper}.  Therefore, 
there 
are no vortices at time $t_\eps$, which implies $\norm{ 1 - \abs{ u_\eps(t_\eps) } }_{L^\infty} = o_\eps(1)$.\qedhere
\end{asparaenum}
\end{proof}

\section{Stochastic Models for Driving Dipoles.}\label{s:stochastic}

We devote this whole section to proving Theorem~\ref{t:NucProb}.
  When $\beta_\eps = 0$ the SDE~\eqref{e:SDEa} reduces to~\eqref{e:ODEa} which has a locally attracting point at $0$.
  Standard large deviation results can be used to estimate the chance that $a$ escapes from $0$, however, these results don't directly apply in this scenario as the initial position, strength of the noise, and the interval length all depend on $\eps$.
  While these obstructions can likely be overcome abstractly, the problem at hand admits an explicit solution and we handle it directly instead.

  We know (see for instance~\cite[\S9]{Oksendal03}) that the function $\varphi_\eps$ defined by
  \begin{equation*}
    \varphi_\eps(x) \defeq \prob^x( a_\eps(\tau_\eps) = \hat A_\eps )\,.
  \end{equation*}
  satisfies the equation
  \begin{equation}\label{e:Vp}
    \beta_\eps \partial_x^2 \varphi_\eps - b_\eps \partial_x \varphi_\eps = 0\,,
  \end{equation}
  with boundary conditions
  \begin{equation*}
    \varphi_\eps(0) = 0\,,
    \quad\text{and}\quad
    \varphi_\eps(\hat A) = 1\,.
  \end{equation*}

  Let $B_\eps = \int b_\eps$ be a primitive of $b_\eps$.
  The solution to~\eqref{e:Vp} is given by
  \begin{equation*}
    \varphi_\eps(z)
      = \frac{ \int_0^{z} e^{B_\eps / \beta_\eps} }
	{ \int_0^{\hat A_\eps} e^{B_\eps / \beta_\eps} }
      = \frac{
	  \int_0^{z} x^{\frac{1}{\beta_\eps \abs{\ln \eps}} }
	      \exp\paren[\Big]{ \frac{ - \lambda \hex x }{\beta_\eps \abs{\ln \eps}} } \, dx
	}
	{
	  \int_0^{\hat A_\eps} x^{\frac{1}{\beta_\eps \abs{\ln \eps}} }
	      \exp\paren[\Big]{ \frac{ - \lambda \hex x }{\beta_\eps \abs{\ln \eps}} } \, dx
	}\,.
  \end{equation*}
  Using~\eqref{e:C0} and making the substitution $y = c_\eps x / \beta_\eps$ yields
  \begin{equation*}
    \prob^z(a_\eps(\tau_\eps) = \hat A_\eps)
    = \varphi_\eps(z)
      = \frac{
	  \int_0^{c_\eps z/\beta_\eps} y^{\frac{1}{\beta_\eps \abs{\ln \eps}} } e^{-y} \, dy
	}
	{
	  \int_0^{1/(\beta_\eps \abs{\ln \eps})}
	      y^{\frac{1}{\beta_\eps \abs{\ln \eps}} } e^{-y} \, dy
	}\,.
  \end{equation*}
  Thus, Theorem~\ref{t:NucProb} now reduces to understanding the asymptotic behaviour of the right hand side as $\eps \to 0$.

  To this end, define
  \begin{equation*}
    m_\eps = \frac{c_\eps \eps^\alpha}{\beta_\eps}
    \quad\text{and}\quad
    n_\eps = \frac{1}{\beta_\eps \abs{\ln\eps}}\,,
  \end{equation*}
  and observe
  \begin{equation}\label{e:PnucGamma}
    \prob^{\eps^\alpha}( a_\eps(\tau_\eps) = \hat A_\eps )
    = \varphi_\eps(\eps^\alpha)
      = \frac{
	  \gamma \paren{n_\eps + 1, m_\eps}
	}{
	  \gamma\paren{n_\eps + 1, n_\eps }
	}\,,
  \end{equation}
  where
  \begin{equation*}
    \gamma( s, x) = \int_0^x t^{s-1} e^{-t} \, dt
  \end{equation*}
  is the incomplete lower gamma function.
  Note
  \begin{equation*}
    \frac{m_\eps}{n_\eps}
      = \lambda \hex \eps^\alpha 
      \xrightarrow{\eps \to 0} 0\,.
  \end{equation*}
  We now split the analysis into cases.

  \begin{proofcases}
    \case{$\beta_\eps \ll 1/\abs{\ln \eps}$}
    In this case $n_\eps \to \infty$.
    Clearly
    \begin{equation}\label{e:LgammaNM}
      \gamma(n_\eps+1, m_\eps) \leqslant \int_0^{m_\eps} t^{n_\eps} \, dt = \frac{m_\eps^{n_\eps+1}}{n_\eps+1}.
    \end{equation}
    To estimate $\gamma(n_\eps+1, n_\eps)$, observe first that for any $x > 1$, $\gamma(s, x)$ is decreasing in $s$ when $s$ is sufficiently large.
    Thus, without we can without loss of generality, assume $n_\eps \in \N$.
    Repeatedly integrating by parts we obtain the identity
    \begin{equation*}
      \gamma(n_\eps + 1, x) = n_\eps! ( 1 - e^{-x} e_{n_\eps}(x) )\,,
      \quad\text{where }
      e_n(x) = \sum_{k=0}^n \frac{x^k}{k!}
    \end{equation*}
    is the truncated exponential.
    Since $e^{-n} e_n(n) \to 1/2$ as $n \to \infty$ we must have
    \begin{equation}\label{e:GammaNp1N}
      \lim_{n \to \infty} \frac{\gamma(n+1, n)}{n!} = \frac{1}{2}\,.
    \end{equation}

    For the numerator $\gamma(n_\eps+1, m_\eps)$, clearly
    \begin{equation*}
      \gamma(n_\eps+1, m_\eps) \leqslant \int_0^{m_\eps} t^{n_\eps} \, dt = \frac{m_\eps^{n_\eps+1}}{n_\eps+1}\,,
    \end{equation*}
    and using~\eqref{e:PnucGamma}, \eqref{e:GammaNp1N} and Sterlings formula we have
    \begin{equation*}
      \lim_{\eps \to 0} \frac{\prob^{\eps^\alpha}(a_\eps(\tau_\eps) = \hat A_\eps)}{\eps^\alpha} = 0\,.
    \end{equation*}

    Finally, to estimate $N_\eps$, equation~\eqref{e:Pnuc} shows
    \begin{equation}\label{e:Nestimate}
      \frac{\varphi_\eps(\eps^{\alpha})}{2\eps^\alpha}
      \leqslant N_\eps
      = 1 - ( 1 - \prob^{\eps^\alpha}( a_\eps(\tau_\eps) = \hat A_\eps ) )^{\eps^{-\alpha}}
      \leqslant
	  \frac{2 \varphi_\eps(\eps^{\alpha})}{\eps^\alpha}\,,
    \end{equation}
    and hence $N_\eps \to 0$ as $\eps \to 0$.

    \case{$\beta_\eps \approx 1/\abs{\ln \eps}$}
    In this case we assume
    \begin{equation*}
      \lim_{\eps \to 0} \frac{1}{\abs{\ln \eps} \beta_\eps} = n_0 > 0\,,
    \end{equation*}
    and so $n_\eps \to n_0$ and $m_\eps \to 0$ as $\eps \to 0$.

    Now the denominator in~\eqref{e:PnucGamma} is $\gamma(n_\eps+1, n_\eps)$ which converges to some constant $c_2 > 0$ as $n_\eps \to n_0$.
    The numerator in~\eqref{e:PnucGamma}, $\gamma(n_\eps+1, m_\eps)$, can again be bounded by~\eqref{e:LgammaNM}.
    This shows
    \begin{equation*}
      \prob^{\eps^\alpha}( a_\eps(\tau_\eps) = \hat A_\eps )
	 \leqslant c \paren{\hex \eps^\alpha}^{1 + n_0} \,,
    \end{equation*}
    for some constant $c > 0$.
    Consequently, using~\eqref{e:Nestimate}, we have $N_\eps \leqslant c \hex^{1 + n_0} \eps^{\alpha n_0} \to 0$ as $\eps \to 0$.

    \case{$\beta_\eps \gg 1/\abs{\ln \eps}$}
    In this case both $m_\eps \to 0$,  $n_\eps \to 0$.
    Using the estimate
    \begin{equation*}
      \frac{e^{-x} x^{s}}{s} \leqslant \gamma(s, x) \leqslant \frac{x^s}{s},
    \end{equation*}
    we see
    \begin{equation}\label{e:Vp1}
      e^{-m_\eps} \paren[\Big]{ \frac{m_\eps}{n_\eps} }^{n_\eps + 1}
      \leqslant \varphi_\eps(\eps^\alpha)
      \leqslant e^{n_\eps} \paren[\Big]{ \frac{m_\eps}{n_\eps} }^{n_\eps + 1}\,.
    \end{equation}

    To compute the limiting behaviour of $N_\eps$, observe
    \begin{equation*}
      \lim_{\eps \to 0} \frac{-\ln (1 - N_\eps)}{\eps^{-\alpha} \varphi_\eps(\eps^\alpha)}
	= \frac{-\ln( 1 - \varphi_\eps(\eps^\alpha) )}{\varphi_\eps( \eps^\alpha )}
	= 1\,,
    \end{equation*}
    where, for simplicity, we assumed the existence of the limit.
    Using~\eqref{e:Vp1},
    \begin{align}
      \nonumber
      \lim_{\eps \to 0} \ln\paren[\Big]{ \frac{\varphi_\eps(\eps^\alpha)}{\eps^\alpha} }
	&= \lim_{\eps \to 0} \ln\paren[\Big]{ \frac{ (m_\eps / n_\eps)^{1 + n_\eps} }{\eps^\alpha} }
	\\
	\nonumber
	&= \lim_{\eps \to 0} \brak[\Big]{ \paren[\Big]{1 + \frac{1}{\beta_\eps \abs{\ln \eps}} } \ln\paren[\Big]{ c_\eps \abs{\ln \eps} } - \frac{\alpha}{\beta_\eps} }
	\\
	\label{e:Nprob}
	&= \lim_{\eps \to 0} \brak[\Big]{
	 \frac{1}{\beta_\eps}\paren[\Big]{
	   \frac{\ln\paren[\big]{ \lambda \hex  }}{\abs{\ln \eps}} - \alpha}
	   + \ln \paren{\lambda \hex}} \,,
    \end{align}
    provided the limits exists.
    From this it follows that if $\liminf \beta_\eps \ln \hex > \alpha$, then the limit above is $+\infty$, and hence $N_\eps \to 1$ as $\eps \to 0$.
    On the other hand if $\limsup \beta_\eps \ln \abs{\ln \eps} < \alpha$, then the limit above is $-\infty$, and hence $N_\eps \to 0$ as $\eps \to 0$.
  \end{proofcases}
  This concludes the proof of Theorem~\ref{t:NucProb}.
\begin{remark}\label{r:Nprob}
  In the transition regime (when $\beta_\eps \ln \abs{\ln \eps} \to \alpha$), we note that $N_\eps$ can be estimated from above and below using~\eqref{e:Nprob}.
  The bounds obtained, however, depend on the rate at which $\beta_\eps \ln \hex$ converges to $\alpha$.
\end{remark}

  \appendix
  \section{Annihilation Times of the Heuristic ODE.}\label{s:ODEAnnTimes}
  We devote this appendix to proving Proposition~\ref{p:Tann}, estimating the annihilation times of the heuristic equation~\eqref{e:ODEa}.
    A direct calculation shows that when $\lambda > 0$ the solution to~\eqref{e:ODEa} is given by
    \begin{equation*}
      a_\eps(t) = \frac{1}{\lambda \hex}
	\brak[\Big]{ 1 + W_0( - C \exp\paren[\Big]{ \frac{\lambda^2 \hex^2 t}{\abs{\ln \eps}} }},
    \end{equation*}
    for some constant $C$.
    Here $W_0$ is the principal branch of the Lambert $W$ function.
    We recall (see~\cite{CorlessGonnetEA96}, or Section~4.13 in~\cite{Olver:2010:NHMF}) that $W_0$ is defined by the functional relation
    \begin{equation*}
      W_0(z e^{z}) = z
    \end{equation*}
    when $z \geqslant -1$.

    Using the initial data $a_\eps(0) = \eps^\alpha$ we find
    \begin{equation*}
      C = (1 - \lambda \hex \eps^\alpha) 
	    e^{- \paren{ 1 - \lambda \hex \eps^\alpha } }.
    \end{equation*}
    Annihilation occurs when $W_0 = -1$ which is precisely when
    \begin{equation*}
      C \exp\paren[\Big]{ \frac{\lambda^2 \hex^2 t_\eps}{\abs{\ln \eps}} }
	= \frac{1}{e}.
    \end{equation*}
    Substituting $C$ above gives
    \begin{equation*}
      \exp\paren[\Big]{ \lambda \hex \eps^\alpha + \frac{\lambda^2 \hex^2 t_\eps}{\abs{\ln \eps}} } = \frac{1}{1 - \lambda \hex \eps^\alpha},
    \end{equation*}
    and hence
    \begin{equation*}
      t_\eps
	= \frac{\abs{\ln\eps}}{\lambda^2 \hex^2}
	  \paren[\Big]{ \abs{ \ln\paren{1 - \lambda \hex \eps^\alpha} } - \lambda \hex \eps^\alpha }
	= \frac{ \eps^{2\alpha} \abs{\ln \eps} }{2} + 
	  \frac{1}{3} \lambda \hex \eps^{3\alpha} \abs{\ln\eps}
	  + \cdots\,.
    \end{equation*}
    Since $\eps^\alpha \hex \abs{\ln \eps} \to 0$ by assumption, dividing both sides by $\eps^{2\alpha} \abs{\ln \eps}$, equation~\eqref{e:Tann} follows.

    It remains to prove~\eqref{e:Tann} when $\lambda = 0$.
    In this case, the exact solution to~\eqref{e:ODEa} is given by
    \begin{equation*}
      a_\eps(t) = \paren[\Big]{ \eps^{2 \alpha} - \frac{2t}{\abs{\ln \eps}} }^{1/2},
    \end{equation*}
    and hence
    \begin{equation}\label{e:TepLambdaEq0}
      t_\eps = \frac{ \eps^{2\alpha} \abs{\ln \eps}  }{2},
    \end{equation}
    for any $\eps > 0$.
    This concludes the proof of Proposition~\ref{p:Tann}.

 \bibliographystyle{habbrv}
 \bibliography{bnucbib,refs}

\def\cprime{$'$}
\begin{thebibliography}{10}

\bibitem{BBC}
H.~Berestycki, A.~Bonnet, and S.~J. Chapman.
\newblock A semi-elliptic system arising in the theory of type-{II}
  superconductivity.
\newblock {\em Comm. Appl. Nonlinear Anal.}, 1(3):1--21, 1994.

\bibitem{BBH}
F.~Bethuel, H.~Brezis, and F.~H{\'e}lein.
\newblock {\em Ginzburg-{L}andau vortices}.
\newblock Progress in Nonlinear Differential Equations and their Applications,
  13. Birkh\"auser Boston, Inc., Boston, MA, 1994.

\bibitem{ChapmanNucleation}
S.~J. Chapman.
\newblock Nucleation of vortices in type-{II} superconductors in increasing
  magnetic fields.
\newblock {\em Appl. Math. Lett.}, 10(2):29--31, 1997.

\bibitem{Chugreeva}
O.~{Chugreeva} and C.~{Melcher}.
\newblock {Vortices in a Stochastic Parabolic Ginzburg-Landau Equation}.
\newblock {\em ArXiv e-prints}, Jan. 2016, 1601.01926.

\bibitem{CorlessGonnetEA96}
R.~M. Corless, G.~H. Gonnet, D.~E.~G. Hare, D.~J. Jeffrey, and D.~E. Knuth.
\newblock On the {L}ambert {$W$} function.
\newblock {\em Adv. Comput. Math.}, 5(4):329--359, 1996.

\bibitem{NIST:DLMF}
{NIST Digital Library of Mathematical Functions}.
\newblock http://dlmf.nist.gov/, Release 1.0.10 of 2015-08-07.
\newblock Online companion to \cite{Olver:2010:NHMF}.

\bibitem{DuGlobal}
Q.~Du.
\newblock Global existence and uniqueness of solutions of the time-dependent
  {G}inzburg-{L}andau model for superconductivity.
\newblock {\em Appl. Anal.}, 53(1-2):1--17, 1994.

\bibitem{Du}
Q.~Du.
\newblock Numerical approximations of the {G}inzburg-{L}andau models for
  superconductivity.
\newblock {\em J. Math. Phys.}, 46(9):095109, 22, 2005.

\bibitem{FreidlinWentzell93}
M.~I. Freidlin and A.~D. Wentzell.
\newblock Diffusion processes on graphs and the averaging principle.
\newblock {\em Ann. Probab.}, 21(4):2215--2245, 1993.

\bibitem{GorkovEliashberg65}
L.~P. {Gor'kov} and G.~M. {{\'E}liashberg}.
\newblock Minute metallic particles in an electromagnetic field.
\newblock {\em Soviet Journal of Experimental and Theoretical Physics}, 21:940,
  Nov. 1965.

\bibitem{JerrardSoner}
R.~L. Jerrard and H.~M. Soner.
\newblock The {J}acobian and the {G}inzburg-{L}andau energy.
\newblock {\em Calc. Var. Partial Differential Equations}, 14(2):151--191,
  2002.

\bibitem{Kifer88}
Y.~Kifer.
\newblock {\em Random perturbations of dynamical systems}, volume~16 of {\em
  Progress in Probability and Statistics}.
\newblock Birkh\"auser Boston, Inc., Boston, MA, 1988.

\bibitem{DuLinHysterisis}
F.-H. Lin and Q.~Du.
\newblock Ginzburg-{L}andau vortices: dynamics, pinning, and hysteresis.
\newblock {\em SIAM J. Math. Anal.}, 28(6):1265--1293, 1997.

\bibitem{Oksendal03}
B.~{\O}ksendal.
\newblock {\em Stochastic differential equations}.
\newblock Universitext. Springer-Verlag, Berlin, sixth edition, 2003.
\newblock An introduction with applications.

\bibitem{Olver:2010:NHMF}
F.~W.~J. Olver, D.~W. Lozier, R.~F. Boisvert, and C.~W. Clark, editors.
\newblock {\em {NIST Handbook of Mathematical Functions}}.
\newblock Cambridge University Press, New York, NY, 2010.
\newblock Print companion to \cite{NIST:DLMF}.

\bibitem{PontryaginAndronovEA33}
L.~S. Pontryagin, A.~A. Andronov, , and A.~A. Vitt.
\newblock On statistical considerations of dynamical systems.
\newblock {\em J. Exper. Theoret. Phys}, 33:165--180, 1933.

\bibitem{SSGlobal}
E.~Sandier and S.~Serfaty.
\newblock Global minimizers for the {G}inzburg-{L}andau functional below the
  first critical magnetic field.
\newblock {\em Ann. Inst. H. Poincar\'e Anal. Non Lin\'eaire}, 17(1):119--145,
  2000.

\bibitem{SSGamma}
E.~Sandier and S.~Serfaty.
\newblock Gamma-convergence of gradient flows with applications to
  {G}inzburg-{L}andau.
\newblock {\em Comm. Pure Appl. Math.}, 57(12):1627--1672, 2004.

\bibitem{SerfatyLocal}
S.~Serfaty.
\newblock Local minimizers for the {G}inzburg-{L}andau energy near critical
  magnetic field. {I}.
\newblock {\em Commun. Contemp. Math.}, 1(2):213--254, 1999.

\bibitem{SerfatyARMA}
S.~Serfaty.
\newblock Stable configurations in superconductivity: uniqueness, multiplicity,
  and vortex-nucleation.
\newblock {\em Arch. Ration. Mech. Anal.}, 149(4):329--365, 1999.

\bibitem{SerfatyCollision1}
S.~Serfaty.
\newblock Vortex collisions and energy-dissipation rates in the
  {G}inzburg-{L}andau heat flow. {I}. {S}tudy of the perturbed
  {G}inzburg-{L}andau equation.
\newblock {\em J. Eur. Math. Soc. (JEMS)}, 9(2):177--217, 2007.

\bibitem{SerfatyCollision2}
S.~Serfaty.
\newblock Vortex collisions and energy-dissipation rates in the
  {G}inzburg-{L}andau heat flow. {II}. {T}he dynamics.
\newblock {\em J. Eur. Math. Soc. (JEMS)}, 9(3):383--426, 2007.

\bibitem{SpirnGE}
D.~Spirn.
\newblock Vortex dynamics of the full time-dependent {G}inzburg-{L}andau
  equations.
\newblock {\em Comm. Pure Appl. Math.}, 55(5):537--581, 2002.

\end{thebibliography}
\end{document}